\setlist[description]{style=unboxed,leftmargin=.5em}
\setlist[itemize]{style=sameline,leftmargin=2em}
\colorlet{xlinkcolor}{red!50!black}
\definecolor{refblue}{RGB}{26,13,171}
\newtheorem{definition}{Definition}
\newtheorem{lemma}{Lemma}
\newtheorem{theorem}{Theorem}
\newtheorem{remark}{Remark}
\numberwithin{equation}{section}
\numberwithin{lemma}{section}
\numberwithin{example}{section}
\numberwithin{definition}{section}
\numberwithin{assumption}{section}
\numberwithin{theorem}{section}
\numberwithin{proposition}{section}
\numberwithin{corollary}{section}
\numberwithin{remark}{section}
\DeclareMathOperator*{\argmin}{arg\,min}
\newcommand\aff{{\sf aff}}
\newcommand{\boundary}{{\sf bdry}}
\newcommand{\conv}{{\sf conv}}
\newcommand{\dist}{{\sf dist}}
\newcommand{\dom}{{\sf dom}}
\newcommand{\gr}{{\sf gph}}
\newcommand{\interior}{{\sf int}}
\newcommand{\lin}{{\sf lin}}
\renewcommand{\ker}{{\sf ker}}
\newcommand{\range}{{\sf rge}}
\newcommand{\gph}{{\sf gph}}
\newcommand\Span{{\sf span}}
\renewcommand\Re{{\mathds R}}
\newcommand{\ri}{{\rm ri}}
\newcommand{\closure}{{\rm cl}}
\newcommand{\epi}{{\rm epi}}
\newcommand\rS{{\mathbb S}}
\newcommand\rX{{\mathbb X}}
\newcommand\KKT{{\rm KKT}}
\newcommand{\ds}{\displaystyle}
\def\[{\begin{equation}}
	\def\]{\end{equation}}
\setlist[description]{style=multiline,leftmargin=2em}
\setlist[itemize]{style=standard,leftmargin=2em}
\setlist[enumerate]{style=standard,leftmargin=2.2em,itemsep=3pt}
\title{\Large\bf
	Localized Dynamic Mode Decomposition with Temporally Adaptive Segmentation \thanks{The research of this work was supported by the National Key R\&D Program of China (No. 2021YFA1001300), the National Natural Science Foundation of China (Nos. 12271150, 12471405), and the Science and Technology Innovation Program of Hunan Province (No. 2025RC3080).}}
\author{Qiuqi Li\thanks{School of Mathematics, Hunan University, Changsha 410082, China.
		({\url{qli28@hnu.edu.cn}}, {\url{chang10@hnu.edu.cn},} {\url{yyf2025@hnu.edu.cn}}).}
	\and Chang Liu\footnotemark[2]
	\and Yifei Yang \footnotemark[2]
}
\date{\today}
\begin{document}
	
	\maketitle
	
	\begin{abstract}
		Dynamic mode decomposition (DMD) is a widely used data-driven algorithm for predicting the future states of dynamical systems. However, its standard formulation often struggles with poor long-term predictive accuracy. To address this limitation, we propose a localized DMD (LDMD) framework that improves prediction performance by integrating DMD’s strong linear forecasting capabilities with time-domain segmentation techniques. In this framework, the temporal domain is segmented into multiple subintervals, within which snapshot matrices are constructed and localized predictions are performed. We first present the localized DMD method with predefined segmentation, and then explore an adaptive segmentation strategy to further enhance computational efficiency and prediction robustness. Furthermore, we conduct an error analysis that provides the upper bound of the local and global truncation error for the proposed framework. The effectiveness of LDMD is demonstrated on four benchmark problems—Burgers’, Allen–Cahn, nonlinear Schrödinger, and Maxwell’s equations. Numerical results show that LDMD significantly enhances long-term predictive accuracy while preserving high computational efficiency.

		\bigskip
		\noindent
		{\bf Keywords:}
		dynamic mode decomposition, 
		dynamical systems,
		temporally adaptive segmentation,
		localized dynamic mode decomposition

		\medskip
		\noindent 
		{\bf MSCcodes:}
		37M10, 37M99, 65P99
	\end{abstract}

	\section{Introduction}\label{sec:section_intro}
	In various engineering and applied science problems, simulations based on full-order models (FOMs) are computationally prohibitive due to the high dimensionality and complexity of the governing equations. To alleviate this burden, reduced-order modeling (ROM) techniques are widely employed \cite{Quarteroni2013re, Chak2018efficient,gu2021error}. ROM methods are broadly categorized as equation-based (intrusive) \cite{nouy2010proper, Star2021novel} or data-driven (non-intrusive) \cite{ma2021data,dave2025physics,Dynamic2010Schmid,peherstorfer2016data}, depending on whether the governing equations are explicitly used. Traditional data-driven ROM approaches include proper orthogonal decomposition - radial basis function interpolation (POD-RBF) \cite{rozza2022advanced} and machine learning (ML) \cite{Mohamed2018}. In addition, operator inference \cite{peherstorfer2016data} represents a more recent non-intrusive framework that connects purely data-driven learning with the physical structure of the system.
	
	
	Dynamic mode decomposition (DMD), as a powerful data-driven non-intrusive ROM, describes the dynamical system in an equation-free manner and can be used for prediction and control \cite{dylewsky2019dynamic, Koopman2024Ghosh, Dynamic2014Proctor, Deep2018Lusch}. Similar to operator inference, the DMD method provides a purely data-driven linear approximation to nonlinear dynamics \cite{Dynamic2010Schmid}. 
	The seminal spectral theory of the Koopman operator \cite{koopman1931hamiltonian,koopman1932dynamical} provides a transparent linear representation of nonlinear dynamics in the space of observable functions, forming the theoretical basis for the development of data-driven model reduction techniques such as DMD \cite{mezic2005spectral}. The core idea of the DMD seeks to leverage Koopman operator theory to approximate a linear operator that advances a given time-series data matrix in time, thereby enabling the representation of system dynamics in a low-dimensional subspace while extracting the dominant spatio-temporal modes.

	
	
	The DMD method is closely related to the Arnoldi method \cite{Application2011Schmid} in analyzing the characteristic patterns and dynamic behaviors of processing systems. The core of the Arnoldi method involves iteratively generating Krylov subspaces and using the orthogonalization process to construct low-dimensional projection matrices. In contrast, DMD employs the Koopman operator theory as a bridge to extract system features and spatio-temporal evolution modes \cite{Analysis2013Mezic,Spectral2009Rowley}, positioning DMD as an approximation of the Koopman operator in finite-dimensional space \cite{Koopman2016Brunton}. This connection forms a solid foundation for DMD to handle nonlinear systems, making its framework increasingly robust in theoretical \cite{An2012Duke,On2014Tu} and numerical \cite{Ergodic2017Arbabi}. Later research established the convergence of the DMD algorithm under the strong operator topology (SOT) \cite{On2018Korda}. Additionally, the Liouville operator has been proposed as an alternative to the Koopman operator to improve convergence analysis \cite{Singular2023Rosenfeld}. Furthermore, the prediction accuracy of DMD has been investigated through parabolic equations, considering both local and global truncation errors \cite{Prediction2020Lu}. 
	
	
	Standard DMD may perform poorly in long-term predictions due to its reliance on short-term dynamics. To address this limitation, a variety of time-dependent variants of DMD have been developed \cite{colbrook2024multiverse,Dynamic2022Schmid}, which enhance the characterization of complex dynamical systems from different perspectives. Multiresolution DMD (mrDMD) \cite{Multiresolution2016Kutz} extracts slow-varying trends through hierarchical temporal decomposition to capture multiscale dynamics. Windowed DMD (W-DMD) \cite{tirunagari2015windowed,zhang2019online} employs a sliding time window to adapt to non-stationary dynamical evolution. Online DMD \cite{zhang2019online} enables incremental updates for real-time streaming data tracking and accelerates the computation of W-DMD. Higher Order DMD (HODMD) \cite{Higher2017Clainche} leverages Takens' delayed embedding theorem \cite{Detecting2006Takens} to incorporate richer temporal information and historical states. DMD with memory \cite{redman2025koopman} achieves adaptive integration of mode identification and prediction by comparing Koopman spectral similarities across temporal windows.

	\color{black}

	The DMD framework strongly depends on the choice of observation functions, which becomes particularly challenging in the context of strongly nonlinear systems. Since conventional DMD performs a global linearization over the entire time domain, its ability to capture local nonlinear behaviors is limited.
	Motivated by the linearized truncation of the Taylor expansion and the time-domain decomposition method \cite{Algoritmic2014Lovric,Space-time2013Hoang}, we propose a localized DMD (LDMD) method that performs local linear approximations in the time domain to better represent nonlinear dynamics. The  LDMD method integrates the time-domain decomposition method with the DMD method to effectively address long-time-scale dynamical problems. Specifically, the complete temporal domain is decomposed into a sequence of shorter segments, with DMD applied sequentially in each segment.
	Additionally, we incorporate an adaptive segmentation strategy \cite{ji2024aaroc} with an error estimator to dynamically determine the number of time steps for each stage, keeping prediction errors controlled within a stable range. This approach achieves higher accuracy, broader applicability, and faster computation for long-term predictions.  To evaluate the performance of the proposed method, we apply it to various nonlinear dynamical systems, successfully achieving accurate approximations over the entire time domain.

	
	The outline of this paper is as follows: In section \ref{sec:section_DMD}, we briefly review the DMD method and its connection to Koopman operator theory. In section \ref{sec:section_LDMD}, we introduce LDMD with predefined segmentation that requires prior knowledge of time segmentation, and LDMD with adaptive segmentation that applies residual equations as error estimators. In section \ref{sec:section_analysis}, we provide an upper bound of the local and global truncation error of the LDMD method. In section \ref{sec:section_numerical}, we selected some representative and important numerical examples to verify that our method achieves higher accuracy and more robust prediction results than those of the standard DMD, its variants, and POD-RBF. In section \ref{sec:conclu}, we summarize the results with a discussion of advantages, challenges, and future work. 
	
	\section{The DMD}
	\label{sec:section_DMD}
	DMD is a widely recognized technique that utilizes system data measurements exclusively to approximate the underlying spatiotemporal dynamics and provide predictions. In this section, we present the general form of the dynamical system as follows:
	\begin{equation}\label{eq:DMD_1}
		\frac{\partial \mathbf{u}(t,\mathbf{x})}{\partial t}=\mathbf{f}(\mathbf{u}, D\mathbf{u}, D^2\mathbf{u},\cdots),\quad \mathbf{x}\in\Omega\subset\mathbb{R}^{N_x},\quad t\in [t_0,T],
	\end{equation}
	where $D$ denotes the difference operator, $\mathbf{u}(t,\mathbf{x})$ represents the solution vector at time $t$ and spatial coordinates $\mathbf{x}\in \Omega$, $N_x$ denotes the spatial degrees of freedom in the system. The function $\mathbf{f}\in\mathbb{C}^{N_x}$ characterizes a linear or non-linear function involving $\mathbf{u}$ and its spatial derivatives. The initial condition is $\mathbf{u}(t_0,\mathbf{x})=\mathbf{u}_0(\mathbf{x})$, while the boundary conditions may be Dirichlet, Neumann, or mixed types.
	
	Equation (\ref{eq:DMD_1}) is discretized at times $t_k = t_0 + k\Delta t$, yielding the discrete system:
	\begin{equation}\label{eq:DMD_2}
		\mathbf{u}_{k+1}=\mathbf{F}(\mathbf{u}_{k}),
	\end{equation}
	where $\mathbf{u}_k=\mathbf{u}(t_0+k\Delta t, \mathbf{x})$ with $\Delta t$ being the size of the time step, $N_t$ denotes the degrees of freedom of time in the system. $\mathcal{M}$ denotes the state space, and  $\mathbf{F}$ is a map from $\mathcal{M}$ to itself. Let $\mathbf{F}^t\colon\mathcal{M}\to\mathcal{M}$ be the flow map operator, and $\mathbf{F}^{t-t_0}$ advance the initial conditions $\mathbf{u}_0(\mathbf{x})$ to the dynamical system (\ref{eq:DMD_1}) from initial time $t_0$ to final time $t$. Therefore, trajectories evolve according to
	\begin{equation}\label{eq:DMD_7}
		\mathbf{u}(t,\mathbf{x})=\mathbf{F}^{t-t_0}(\mathbf{u}_0(\mathbf{x})).
	\end{equation}
	DMD is deeply connected to the Koopman theory. We briefly review the Koopman operator theory and the DMD algorithm in subsections \ref{sec-Koopm} and \ref{sec-DMDalg}.
	
	\subsection{Koopman operator and its mode decomposition} \label{sec-Koopm}
	The Koopman operator is a linear operator defined in the space of observation functions, which facilitates the representation of nonlinear dynamical systems in the state space as linear systems in the observation function space. For dynamical systems (\ref{eq:DMD_1}) or \eqref{eq:DMD_7}, the family of Koopman operators parameterized by the time variable $t$ can be defined as follows.
	\begin{definition}[The family of Koopman operator]\label{def:koopman}
		Let $\bm{G}(\mathcal{M})$ be an infinite dimensional observation function space for any scalar-valued observable function $g: \mathcal{M} \rightarrow \mathbb{C}$.  The family of Koopman operator $\mathcal{K}^t: \bm{G}(\mathcal{M}) \rightarrow \bm{G}(\mathcal{M})$ is defined by
		\begin{equation}\label{eq:DMD_6}
			\mathcal{K}^t g(\mathbf{u}):=g\big(\mathbf{F}^t(\mathbf{u})\big), \quad \forall g\in \bm{G}(\mathcal{M}).
		\end{equation}
	\end{definition}
	\begin{definition}[Koopman operator \cite{dynamic2016kutz}]
		The discrete-time dynamical system (\ref{eq:DMD_2}) also known as a map, evolves over time and is more general than the continuous-time formulation. The corresponding discrete-time Koopman operator $\mathcal{K}$ is
		\begin{equation}\label{eq:DMD_3}
			\mathbf{u}_{k+1}=\mathbf{F}(\mathbf{u}_{k})\Rightarrow g(\mathbf{u}_{k+1})=\mathcal{K}g(\mathbf{u}_{k}),
		\end{equation}
		where $\mathcal{K}$ is an infinite-dimensional linear operator that acts on the Hilbert space. 
	\end{definition}
	
	The spectral decomposition theory of the Koopman operator provides an explicit representation of observable functions in terms of its eigenfunctions. Let $(\lambda_i,\varphi_i)_{i=1}^{\infty}$ denote the eigenpairs of the Koopman operator $\mathcal{K}$. By performing multiple measurements of the system, we obtain a set of scalar observation functions $\mathbf{g}=\{g_{1},\ldots,g_{q}\}$. Suppose the observation function can be represented as a linear combination of the Koopman eigenfunctions, we have
	\begin{equation}\label{eq:DMD_4}
		\left.\left.\left.\mathbf{g}(\mathbf{u})=\left[
		\begin{array}
			{c}g_1(\mathbf{u}) \\
			\vdots \\
			g_q(\mathbf{u})
		\end{array}\right.\right.\right.\right]=\sum_{i=1}^{\infty}\varphi_i(\mathbf{u})
		\begin{bmatrix}
			<\varphi_i,g_1> \\
			\vdots \\
			<\varphi_i,g_q>
		\end{bmatrix}=\sum_{i=1}^{\infty}\varphi_i(\mathbf{u})\mathbf{v}_i, \quad \mathbf{v}_i=\begin{bmatrix}
			<\varphi_i,g_1> \\
			\vdots \\
			<\varphi_i,g_q>
		\end{bmatrix}.
	\end{equation}
	Based on (\ref{eq:DMD_3}) and (\ref{eq:DMD_4}), the discrete-time evolution of the observable vector $\mathbf{g}(\mathbf{\mathbf{u}}_k)$ can be expressed as
	\begin{equation*}
		\mathbf{g}(\mathbf{u}_{k})=\sum_{i=1}^{\infty}\lambda_i^k\varphi_i(\mathbf{u}_0)\mathbf{v}_i.
	\end{equation*}
	The sequence of triples $\{(\lambda_{i},\varphi_{i},\mathbf{v}_{i})\}_{i=1}^{\infty}$ constitutes the Koopman mode decomposition. In this context, the DMD eigenvalues serve as approximations to the Koopman eigenvalues $\lambda_i$, the DMD modes approximate the Koopman modes $\mathbf{v}_i$, and the amplitudes of the DMD modes approximate the values of the Koopman eigenfunctions evaluated at the initial condition. When the chosen observable functions $\mathbf{g}=\{g_{1},\ldots,g_{q}\}$ are constrained to an invariant subspace spanned by the eigenfunctions of the Koopman operator $\mathcal{K}$, they induce a finite-dimensional linear operator \cite{Koopman2016Brunton,brunton2021modern}.
	
	\subsection{The DMD algorithm} \label{sec-DMDalg}
	DMD is an equation-free data-driven method that relies solely on observational data to approximate the Koopman eigenvalues and eigenvectors. Given a sequence of snapshot data
	$\{\mathbf{y}_0,\mathbf{y}_1,\cdots,\mathbf{y}_{M}\}$, sampled at uniform time intervals, where $\mathbf{y}=\mathbf{g}(\mathbf{u})$ represents the observation function. We define the data matrices of observables $\mathbf{Y}_1$ and $\mathbf{Y}_2$ as follows:
	\begin{equation}\label{eq:DMD_5}
		\begin{gathered}
			\mathbf{Y}_1=
			\begin{bmatrix}
				| & | & & | \\
				\mathbf{y}_0 & \mathbf{y}_1 & \cdots & \mathbf{y}_{M-1} \\
				| & | && |
			\end{bmatrix},\quad
			\mathbf{Y}_2=
			\begin{bmatrix}
				| & | & & | \\
				\mathbf{y}_1 & \mathbf{y}_2 & \cdots & \mathbf{y}_{M} \\
				| & | & & |
			\end{bmatrix},
		\end{gathered}
	\end{equation}
	where $M$ is the number of snapshots. The objective of DMD is to determine the Koopman matrix $\mathbf{A} \in \mathbb{C}^{qN_x\times qN_x}$ that satisfies
	\begin{equation*}
		\mathbf{y}_{k+1}= \mathbf{A}\mathbf{y}_k,
	\end{equation*}
	where $\mathbf{A}$ serves as an approximation of the Koopman operator. The best-fit DMD matrix is obtained by solving the following optimization problem:
	\begin{equation*}
		\mathbf{A}=\underset{{A}\in\mathbb{C}^{qN_x\times qN_x}}{\arg\min}\left\|\mathbf{Y}_{2}-{A}\mathbf{Y}_{1}\right\|_{F}=\mathbf{Y}_{2}\mathbf{Y}_{1}^{\dagger},
	\end{equation*}
	where $\|\cdot\|_{F}$ represents the Frobenius norm, and $\dagger$ denotes the Moore-Penrose pseudo-inverse. 
	The DMD algorithm approximates the eigenvalues and eigenvectors of $\mathbf{A}$ in the observable space. It is very expensive to do the eigen-decomposition directly on the matrix $\mathbf{A}$.  In practical computations, we employ Algorithm \ref{alg:algorithm-DMD} to approximate the eigenvalues and eigenvectors of $\mathbf{A}$. 
	
	\begin{algorithm}
		\caption{Standard DMD}
		\renewcommand{\algorithmicrequire}{\textbf{Input:}}
		\renewcommand{\algorithmicensure}{\textbf{Output:}}
		\begin{algorithmic}[1]
			\REQUIRE Snapshots $\{\mathbf{y}_{0}, \mathbf{y}_{1}, \dots, \mathbf{y}_{M}\}$, truncated rank $r$
			\ENSURE DMD solution $\mathbf{u}^{\text{DMD}}(t,\mathbf{x})$
			
			\STATE Construct $\mathbf{Y}_1$ and $\mathbf{Y}_2$ as defined in (\ref{eq:DMD_5}).
			\STATE Perform singular value decomposition (SVD) on $\mathbf{Y}_1$: $\mathbf{Y}_1 \approx \mathbf{U} \mathbf{\Sigma} \mathbf{V}^{H}$ with \\$\mathbf{U} \in \mathbb{C}^{qN_x \times r},\mathbf{\Sigma} \in \mathbb{C}^{r \times r}, \mathbf{V} \in \mathbb{C}^{M \times r}.$
			\STATE Define the reduced-order operator:
			$
			\tilde{\mathbf{A}} = \mathbf{U}^{H} \mathbf{Y}_2 \mathbf{V} \mathbf{\Sigma}^{-1}.$
			\STATE Compute the eigenvalues and eigenvectors of $\tilde{\mathbf{A}}$: $\tilde{\mathbf{A}} \mathbf{W} = \mathbf{W} \mathbf{\Lambda}, \quad \mathbf{\Lambda} = \text{diag}(\lambda_i).$
			\STATE Compute the DMD modes:
			$
			\mathbf{\Phi} = \mathbf{U} \mathbf{W}.
			$
			\STATE Compute the future state in the observable space:
			$\mathbf{g}(\mathbf{u}^{\text{DMD}}_k) = \mathbf{\Phi} \mathbf{\Lambda}^k \mathbf{b}$ with $\mathbf{b} = \mathbf{\Phi}^{\dagger} \mathbf{g}(\mathbf{u}_0),$
			and its continuous formulation:
			$\mathbf{g}(\mathbf{u}^{\text{DMD}}(t, \mathbf{x})) = \mathbf{\Phi} \, \text{diag}(\exp(\omega t)) \mathbf{b}$ with $\omega_i = \frac{\ln(\lambda_i)}{\Delta t}.$
			\STATE Transform back to the state space:
			$\mathbf{u}^{\text{DMD}}(t, \mathbf{x}) = \mathbf{g}^{-1}(\mathbf{g}(\mathbf{u}^{\text{DMD}}(t, \mathbf{x}))),$
			where $\mathbf{g}^{-1}$ is determined in the least-squares sense if $\mathbf{g}$ is not invertible.
		\end{algorithmic}
		\label{alg:algorithm-DMD}
	\end{algorithm}
	
	\begin{remark}\label{remark_observation}
		Within the framework of Koopman theory, selecting appropriate observable functions is essential for the predictive accuracy of the DMD method. An appropriately chosen set of observables can ensure that the dynamics are well represented within a finite-dimensional invariant subspace. However, identifying such observables often requires deep insight into the underlying physical mechanisms, which is often challenging even when the governing equations are explicitly known.
	\end{remark}
	
	For systems exhibiting strong nonlinearities or oscillatory behavior, the standard DMD framework often fails to capture the intrinsic dynamics accurately.
	In the subsequent section, we propose an LDMD approach, which employs a proper temporal segmentation strategy to enhance the predictive accuracy.

	\section{Localized DMD method} \label{sec:section_LDMD}
	In the simulation of time-dependent physical systems, the computational domain often contains multiple subregions with distinct physical properties. To this end, a time-domain segmentation strategy is introduced to enable DMD to extract essential features more effectively within each temporal segment. 
	
	
	\subsection{Localized DMD with predefined segmentation}
	To implement the proposed method, the time domain $[t_0, T]$ is divided into $N$ sub-intervals based on prior knowledge of the system (\ref{eq:DMD_1}):
	\begin{equation*}
		t_0 < t_1 < \dots < t_N = T,
	\end{equation*}
	where each sub-interval is referred to as a stage. Within each stage, the initial portion of the data is used to construct the snapshot matrices, which are then employed to predict the remaining evolution. Let $\hat{\mathbf{Y}}_1^i$ and $\hat{\mathbf{Y}}_2^i$ denote the snapshot matrices for the $i$-th stage ($i = 1, 2, \dots, N$). If $n_i$ snapshots are selected in each stage, they are defined as
	\begin{equation} 
		\begin{gathered}
			\hat{\mathbf{Y}}_1^i =
			\begin{bmatrix}
				| & | & & | \\
				\mathbf{y}^i_0 & \mathbf{y}^i_1 & \dots & \mathbf{y}^i_{n_i-1} \\
				| & | & & |
			\end{bmatrix},\quad
			\hat{\mathbf{Y}}_2^i =
			\begin{bmatrix}
				| & | & & | \\
				\mathbf{y}^i_1 & \mathbf{y}^i_2 & \dots & \mathbf{y}^i_{n_i} \\
				| & | & & |
			\end{bmatrix},
		\end{gathered}
	\end{equation}
	where $\mathbf{y}_k^i$ denote the observation vectors at the $k$-th time step of stage $i$, respectively. For $i = 1$, the snapshots are taken directly from the initial dataset,
	$\{\mathbf{y}_0^1, \mathbf{y}_1^1, \dots, \mathbf{y}_{n_1}^1\} = \{\mathbf{y}_0, \mathbf{y}_1, \dots, \mathbf{y}_{n_1}\}$.
	For $i \geq 2$, the final state of stage $i-1$ serves as the initial condition for stage $i$.
	
	At the beginning of each stage, a FOM—such as the finite difference (FDM) or finite element method (FEM)—is used to generate the snapshot data, governed by
	\begin{equation} \label{eq:LDMD_2}
		\frac{\partial \mathbf{u}^{i}(t, \mathbf{x})}{\partial t} = \mathbf{f}(\mathbf{u}^{i}, D\mathbf{u}^{i}, D^2\mathbf{u}^{i}, \dots), \quad \mathbf{x} \in \Omega \subset \mathbb{R}^{N_x}, \quad t \in [t_{i-1}, t_{(i-1)'}],
	\end{equation}
	where $t_{(i-1)'} = t_{i-1} + n_i \Delta t$, $t_i=t_{i-1}+(n_i+m_i)\Delta t$, $m_i$ denotes the prediction step at $i-th$ stage. The initial condition is set as $\mathbf{u}^{i+1}(t_{i}, \mathbf{x}) = \mathbf{u}^{\text{LDMD},i}(t_{i}, \mathbf{x})$, the superscript $i$ indicates the stage to which the state $\mathbf{u}$ belongs.
	The main steps of LDMD with predefined segmentation (P-LDMD) are summarized in Algorithm \ref{alg:algorithm-DMD2}.
	
	\begin{algorithm}
		\caption{LDMD with Predefined Segmentation (P-LDMD)}
		\renewcommand{\algorithmicrequire}{\textbf{Input:}}
		\renewcommand{\algorithmicensure}{\textbf{Output:}}
		\begin{algorithmic}[1]
			\REQUIRE The number of stages $N$, snapshots $\{\mathbf{y}_0, \mathbf{y}_1, \dots, \mathbf{y}_{n_1}\}$, and the truncated rank $r$.
			\ENSURE The solution of P-LDMD $\{\mathbf{u}^{\text{LDMD},i}(t, \mathbf{x})\}_{i=1}^{N}$.
			
			\FOR{$i = 1, 2, \dots, N-1$}
			\STATE Construct the DMD model with snapshots $\{\mathbf{y}^i_0, \mathbf{y}^i_1, \dots, \mathbf{y}^i_{n_i}\}$.
			\STATE Predict $\mathbf{u}^{\text{LDMD},i}(t, \mathbf{x})$ at time interval $[t_{i-1},t_i]$.
			\STATE Compute the FOM as the initial value $\mathbf{u}^{\text{LDMD},i}(t_i, \mathbf{x})$ at time interval $[t_{i},t_{i^{'}}]$ to get $n_{i+1}$ snapshots.
			\ENDFOR
			\STATE Construct the DMD model with snapshots $\{\mathbf{y}^N_0, \mathbf{y}^N_1, \dots, \mathbf{y}^N_{n_N}\}$.
			\STATE Predict $\mathbf{u}^{\text{LDMD},N}(t, \mathbf{x})$ at time interval $[t_{N-1},t_N]$.
		\end{algorithmic}
		\label{alg:algorithm-DMD2}
	\end{algorithm}
	
	To further improve the computational efficiency of the proposed LDMD approach, an adaptive domain segmentation methodology will be developed in the next subsection.

	\subsection{Localized DMD with adaptive segmentation}\label{sec:section_opt}
	
	Since DMD demonstrates strong predictive performance for linear systems, the fundamental objective of temporal segmentation is to ensure that each resulting segment can be closely approximated by a linear system representation. When the FOM equation and its reference solution are available, we attempt to deduce the theoretically optimal segmentation strategy, designated as ``opt-LDMD".

	We consider the dynamical system (\ref{eq:DMD_1}). The right-hand term is denoted concisely as $\mathbf{f}(\mathbf{u})$. To analyze the local behavior, we perform the Taylor expansion for $\mathbf{f}(\mathbf{u})$ on $\mathbf{u}$ as follows:
	\begin{equation*}
		\mathbf{f}(\mathbf{u}+\delta \mathbf{u})=\mathbf{f}(\mathbf{u})+J(\mathbf{u})\delta \mathbf{u}+\frac{1}{2}H(\mathbf{u}+\theta\delta\mathbf{u})[\delta\mathbf{u},\delta\mathbf{u}],\quad\theta\in[0,1],
	\end{equation*}
	where $\delta\mathbf{u}$ denotes a small variation added to the solution vector $\mathbf{u}$, $J(\mathbf{u})$ is the Jacobian, and $H(\mathbf{u}+\theta\delta\mathbf{u})[\delta\mathbf{u},\delta\mathbf{u}]$ represents the bilinear operation of the Hessian tensor $H$ with the vector $\delta\mathbf{u}$. We take $\mathbf{R}(\delta\mathbf{u})=\frac{1}{2}H(\mathbf{u}+\theta\delta\mathbf{u})[\delta\mathbf{u},\delta\mathbf{u}]$ as the first-order remainder term. Therefore, our segmentation strategy is designed to control the error of the first-order remainder in the Taylor expansion. We take the $L^2$-norm of the first-order term as the metric 
	\begin{equation*}
		\|\mathbf{R}(\delta\mathbf{u})\|_2=\left\|\frac{1}{2}H(\mathbf{u}+\theta\delta\mathbf{u})[\delta\mathbf{u},\delta\mathbf{u}]\right\|_2, 
	\end{equation*}
	and construct an upper bound $\varepsilon$. Whenever this bound is exceeded, manual segmentation is performed. 
	
	However, the implementation of opt-LDMD is impractical in real applications, as its segmentation requires prior knowledge of the reference solution. In the absence of such reference data, we aim to develop an adaptive segmentation framework based on an error estimator $\Delta_k$ \cite{wirtz2014posteriori,Jiang2020Adaptive}  to approximate the linearized truncation of the Taylor expansion methodology.

	
	\color{black}
	\subsection{Evaluation criteria based on residuals}
	To guide the adaptive segmentation process, our methodology employs residual-based error evaluation as the primary criterion. The residual not only quantifies the deviation between approximate and reference solutions to evaluate prediction accuracy, but also serves as an effective mathematical tool for measuring the consistency between predicted solutions and the governing equations. This dual capability makes it both practical and reliable for error evaluation in applications where reference solutions are unavailable.
	
	Using the explicit method, we discretize the Equation (\ref{eq:DMD_1}) as:
	\begin{equation} \label{eq:residual_1}
		\frac{\mathbf{u}(t_{k+1}, \mathbf{x}) - \mathbf{u}(t_k, \mathbf{x})}{\Delta t} = \mathbf{f}(\mathbf{u}(t_k, \mathbf{x}), D\mathbf{u}(t_k, \mathbf{x}), \dots).
	\end{equation}
	Defining $\mathbf{u}(t_{k}, \mathbf{x}) = \mathbf{u}_{k}$, $\mathbf{u}(t_{k+1}, \mathbf{x}) = \mathbf{u}_{k+1}$, and $\mathbf{f}(\mathbf{u}(t_k, \mathbf{x}), D\mathbf{u}(t_k, \mathbf{x}), \dots) = \mathbf{f}_{k}$, we substitute the predicted solution $\mathbf{u}^{\text{LDMD}}$ into Equation (\ref{eq:residual_1}) to derive the residual equation:
	\begin{equation} \label{eq:residual_2}
		\mathbf{R}_k = \frac{\mathbf{u}_{k+1}^{\text{LDMD}} - \mathbf{u}_{k}^{\text{LDMD}}}{\Delta t} - \mathbf{f}_k.
	\end{equation}
	The corresponding error estimator can be defined as the norm of the residual equation:
	\begin{equation*}
		\Delta_k = \|\mathbf{R}_k\|_F=\left\|\frac{\mathbf{u}_{k+1}^{\text{LDMD}} - \mathbf{u}_{k}^{\text{LDMD}}}{\Delta t} - \mathbf{f}_k\right\|_F,
	\end{equation*}
	where $\|\cdot\|_F$ denotes the Frobenius norm.
	
	For the LDMD with adaptive segmentation (A-LDMD), prediction accuracy is controlled through the error estimator $\Delta_k$ and a predefined tolerance threshold $\epsilon$. When $\Delta_k>\epsilon$, the current prediction stage is terminated and a new stage is initiated, during which the FOM is invoked to acquire updated snapshot data. We refer to the prediction steps as $m_i$ in each stage. To reduce computational cost, $\Delta_k$ is evaluated only at every $m$ step instead of at every prediction node. Based on this foundation, we now outline the main steps in Algorithm \ref{alg:algorithm-DMD3}.
	
	We define the prediction rate $\gamma$ as the ratio of the prediction time step to the total time steps, i.e.,
	$\gamma = 1-\frac{M}{N_t}$ 
	for the standard DMD method, and 
	\[
	\gamma = \frac{\sum_{i=1}^{N} m_i}{N_t},
	\]
	for LDMD with adaptive segmentation. 
	The prediction rate $\gamma$ generally decreases as $\epsilon$ becomes smaller, indicating a functional dependence on the error threshold $\epsilon$.
	
	\begin{remark}
		The residual-based error estimator requires explicit access to the governing equations, limiting the fully data-driven applicability of the adaptive method. P-LDMD is still a data-driven method; A-LDMD is an effective approximation of the reference method.
		Data assimilation methods, such as Kalman filters or ensemble Kalman filters, can also serve as error estimators by computing the error covariance matrix of the state vector. In the future, we will focus on data-driven approaches, including machine learning and data assimilation, as alternative error estimation methods that do not rely on explicit access to governing equations, thereby enhancing applicability to complex or partially known dynamical systems.
	\end{remark}
	
	\begin{algorithm}
		\caption{LDMD with Adaptive Segmentation (A-LDMD)}
		\renewcommand{\algorithmicrequire}{\textbf{Input:}}
		\renewcommand{\algorithmicensure}{\textbf{Output:}}
		\begin{algorithmic}[1]
			\REQUIRE Given the size of the time window $m$, snapshots $\{\mathbf{y}_{0}, \mathbf{y}_{1}, \dots, \mathbf{y}_{n_1}\}$, the truncated rank $r$, and the residual allowable upper bound $\epsilon$.
			\ENSURE The solution of A-LDMD $\{\mathbf{u}^{\text{LDMD},i}(t, \mathbf{x})\}_{i=1}^{N}$.
			
			\STATE Set the stage counter $c_i=0$ with the stage $i=1$ and the residual $\Delta_k=0$.
			\
			
			\WHILE{$\sum_i n_i + \sum_i c_i m < N_t$}
			\WHILE{$\Delta_k < \epsilon$ and $\sum_i n_i + \sum_i c_i m < N_t$}
			\STATE Construct the DMD model with snapshots $\{\mathbf{y}^i_0, \mathbf{y}^i_1, \dots, \mathbf{y}^i_{n_i}\}$.
			\STATE Advance the prediction time step $m$ and calculate the residual $\Delta_k$ here.
			\STATE $c_i=c_i+1$.
			\ENDWHILE
			\STATE Compute the FOM as the initial value $\mathbf{u}^{\text{LDMD},i}(t_i, \mathbf{x})$ to get $n_{i+1}$ snapshots.
			\STATE $i=i+1$.
			\STATE Set the stage counter $c_i=0$.
			\ENDWHILE\\
			\STATE $N=i$.
		\end{algorithmic}
		\label{alg:algorithm-DMD3}
	\end{algorithm}
	
	\begin{remark}
		In the algorithm \ref{alg:algorithm-DMD3}, the truncated rank $r$ can also be selected adaptively. A common approach is to sort the indices based on the squared singular values in descending order, i.e.,
		\begin{equation*}
			\sigma_1^2>\sigma_2^2>\cdots>\sigma_l^2,
		\end{equation*}
		and determine the truncated rank $r$ such that the following condition is satisfied: 
		\begin{equation*}
			\frac{\sum_{j=1}^r \sigma^2_j}{\sum_{j=1}^l \sigma^2_j}\geq 1-\eta,
		\end{equation*}
		where $\sigma_j$ are the singular values of the snapshot matrix $\hat{\mathbf{Y}}_1^i$ and $l$ denotes the rank of the snapshot matrix $\hat{\mathbf{Y}}_1^i$. The parameter $\eta$ is a threshold for rank selection.
	\end{remark}

	\section{Error analysis}\label{sec:section_analysis}
	In this section, we perform a theoretical error analysis, establishing that the A-LDMD method delivers well-defined upper bounds for both local and global truncation errors. Since P-LDMD constitutes a specialized subclass of the A-LDMD framework, it inherently inherits analogous bounds on local and global truncation errors. 
	
	Selecting appropriate observations for different equations is often challenging, as it requires a comprehensive understanding of the underlying system. Our LDMD method addresses this issue by decomposing complex dynamical systems with large time scales into multiple smaller time-scale segments, effectively reducing the system’s nonlinear behavior within each segment. Consequently, accurate predictions can be achieved by applying identity observations $g(\mathbf{u})=\mathbf{u}$. The linearized discrete  approximation of system \eqref{eq:DMD_1} is given by
	\begin{equation}\label{analysis_1}
		\mathbf{u}_{k+1}=\mathbf{K}\mathbf{u}_{k},\quad k=0,1,\cdots,N_t-1
	\end{equation}
	with
	$\mathbf{K}=\underset{{K}\in\mathbb{C}^{qN_x\times qN_x}}{\operatorname*{\mathrm{argmin}}}\sum_{k=0}^{N_t}\|{K}\mathbf{u}_{k}-\mathbf{u}_{k+1}\|_{F}.$
	If the spectral radius of $\mathbf{K}$, denoted as $\rho(\mathbf{K})$, satisfies $\rho(\mathbf{K})<1$, then the following lemma holds:
	\begin{lemma}[\cite{Prediction2020Lu}]\label{lem:initial_control}
		For the dynamical system (\ref{eq:DMD_1}), subsequent DMD prediction results remain bounded by the initial values:
		\begin{equation}\label{eq:analysis_1}
			||\mathbf{u}_{k+1}||^2_2<||\mathbf{u}_{0}||^2_2.
		\end{equation} 
	\end{lemma}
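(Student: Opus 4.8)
The plan is to turn the statement into a question about the growth of powers of $\mathbf{K}$ and then control that growth through the spectrum. First I would unroll the linear recursion \eqref{analysis_1} to write $\mathbf{u}_{k+1} = \mathbf{K}^{k+1}\mathbf{u}_{0}$, so that the claim \eqref{eq:analysis_1} reduces to showing $\|\mathbf{K}^{k+1}\mathbf{u}_{0}\|_2^2 < \|\mathbf{u}_{0}\|_2^2$ for every $k \geq 0$. The hypothesis $\rho(\mathbf{K}) < 1$ is exactly the information I have about $\mathbf{K}$, and the proof lives or dies on how faithfully the spectral radius controls the induced action on $\mathbf{u}_0$.

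The cleanest route is submultiplicativity: from $\|\mathbf{u}_{k+1}\|_2 = \|\mathbf{K}\mathbf{u}_k\|_2 \leq \|\mathbf{K}\|_2\,\|\mathbf{u}_k\|_2$ a telescoping estimate gives $\|\mathbf{u}_{k+1}\|_2 \leq \|\mathbf{K}\|_2^{\,k+1}\|\mathbf{u}_{0}\|_2$. Whenever the spectral norm coincides with the spectral radius — in particular when $\mathbf{K}$ is normal, which is natural for the symmetric discretizations underlying the parabolic-type problems behind \eqref{eq:DMD_1} — we have $\|\mathbf{K}\|_2 = \rho(\mathbf{K}) < 1$, and the strict inequality \eqref{eq:analysis_1} follows immediately for all $k \geq 0$.

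When $\mathbf{K}$ is not normal I would instead diagonalize $\mathbf{K} = \mathbf{\Phi}\mathbf{\Lambda}\mathbf{\Phi}^{-1}$ with $\mathbf{\Lambda} = \mathrm{diag}(\lambda_i)$, expand the initial state in the eigenbasis as $\mathbf{u}_{0} = \sum_i b_i \boldsymbol{\phi}_i$, and propagate each mode to obtain $\mathbf{u}_{k+1} = \sum_i b_i \lambda_i^{k+1}\boldsymbol{\phi}_i$. Since $\rho(\mathbf{K}) < 1$ forces $|\lambda_i| < 1$ for every $i$, each modal coefficient satisfies $|b_i \lambda_i^{k+1}| < |b_i|$ for $k \geq 0$; measuring the state in the weighted norm induced by the eigenbasis then exhibits $\mathbf{\Lambda}$ as a genuine contraction and yields the strict decay.

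I expect the normality gap to be the main obstacle: for non-normal $\mathbf{K}$ the bound $\rho(\mathbf{K}) < 1$ does not preclude transient growth of $\|\mathbf{K}^{k+1}\|_2$ above $1$ at intermediate $k$, so the strict inequality in the plain Euclidean norm need not hold termwise. Closing this gap requires either strengthening the hypothesis to $\|\mathbf{K}\|_2 < 1$, invoking normality so that $\|\mathbf{K}\|_2 = \rho(\mathbf{K})$, or reading \eqref{eq:analysis_1} in the eigenvector-induced norm; what survives unconditionally under $\rho(\mathbf{K}) < 1$ is the asymptotic decay $\|\mathbf{u}_k\|_2 \to 0$, and I anticipate the intended argument leans on one of these structural assumptions to upgrade that decay to the stated monotone bound.
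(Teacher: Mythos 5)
The paper does not actually prove this lemma: it is imported verbatim from \cite{Prediction2020Lu} (their Lemma 3.1), with only the remark that the present formulation additionally accommodates the source term in \eqref{eq:DMD_1}. Your reconstruction is essentially the standard argument behind that cited result — unroll \eqref{analysis_1} to $\mathbf{u}_{k+1}=\mathbf{K}^{k+1}\mathbf{u}_0$ and control the powers of $\mathbf{K}$ via its spectrum — so in that sense you are on the intended route. More importantly, the obstacle you flag is genuine and not addressed by the paper: $\rho(\mathbf{K})<1$ alone does \emph{not} yield the termwise strict inequality $\|\mathbf{u}_{k+1}\|_2^2<\|\mathbf{u}_0\|_2^2$ in the plain Euclidean norm, since non-normal $\mathbf{K}$ can exhibit transient growth with $\|\mathbf{K}^{k+1}\|_2>1$ for intermediate $k$; only the asymptotic decay $\|\mathbf{u}_k\|_2\to 0$ is unconditional. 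The cited reference gets away with this because the parabolic discretizations it treats produce a (similar-to-)symmetric $\mathbf{K}$, for which $\|\mathbf{K}\|_2=\rho(\mathbf{K})<1$ and the telescoping bound closes; in the present paper's more general setting \eqref{eq:DMD_1} the lemma as stated tacitly needs either $\|\mathbf{K}\|_2<1$, normality, or the eigenvector-weighted norm you describe. Your proposal correctly identifies both the working argument and the hypothesis that must be added for it to be rigorous.
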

	Compared with Lemma $3.1$ in \cite{Prediction2020Lu}, our formulation incorporates the source term in \eqref{eq:DMD_1}, ensuring that subsequent prediction results remain constrained by the initial values, as expressed in \eqref{eq:analysis_1}.
	
	\begin{lemma}[\cite{Prediction2020Lu}]\label{lem:local_error}
		Define the local truncation error
		\begin{equation*}
			\mathbf{\tau}^{\text{DMD}}_{k}=\mathbf{u}_{k}-\mathbf{u}^{\text{DMD}}_k(\mathbf{u}_{k-1}).
		\end{equation*}
		Then, for any $k\geq M$,
		\begin{equation*}
			||\mathbf{\tau}^{\text{DMD}}_k||^2_2\leq \varepsilon_M,
		\end{equation*}
		where $\varepsilon_M$ is a constant dependent only on the number of snapshots $M$.
	\end{lemma}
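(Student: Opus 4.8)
The plan is to reduce the local truncation error to a single application of an operator difference and then invoke the boundedness from Lemma~\ref{lem:initial_control}. Working in the identity-observable setting $g(\mathbf{u})=\mathbf{u}$ adopted in this section, the DMD one-step prediction from the exact previous state is simply linear, namely $\mathbf{u}^{\text{DMD}}_k(\mathbf{u}_{k-1}) = \mathbf{A}\mathbf{u}_{k-1}$, where $\mathbf{A}$ denotes the (rank-$r$ truncated) DMD operator built from the $M$ training snapshots through Algorithm~\ref{alg:algorithm-DMD}. On the other hand, the linearized discrete dynamics \eqref{analysis_1} give $\mathbf{u}_k = \mathbf{K}\mathbf{u}_{k-1}$. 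Subtracting, the local truncation error is exactly the operator mismatch acting on the true state,
\begin{equation*}
\mathbf{\tau}^{\text{DMD}}_k = \mathbf{u}_k - \mathbf{u}^{\text{DMD}}_k(\mathbf{u}_{k-1}) = (\mathbf{K}-\mathbf{A})\mathbf{u}_{k-1}.
\end{equation*}

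First I would apply submultiplicativity of the spectral norm together with Lemma~\ref{lem:initial_control}. This yields
\begin{equation*}
\|\mathbf{\tau}^{\text{DMD}}_k\|_2^2 \leq \|\mathbf{K}-\mathbf{A}\|_2^2\,\|\mathbf{u}_{k-1}\|_2^2 < \|\mathbf{K}-\mathbf{A}\|_2^2\,\|\mathbf{u}_0\|_2^2,
\end{equation*}
where the strict inequality uses the bound $\|\mathbf{u}_{k-1}\|_2^2 < \|\mathbf{u}_0\|_2^2$ established in \eqref{eq:analysis_1}. One then sets $\varepsilon_M := \|\mathbf{K}-\mathbf{A}\|_2^2\,\|\mathbf{u}_0\|_2^2$. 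The key observation is that the right-hand side no longer depends on the prediction index $k$: both $\mathbf{K}$ (the global best-fit operator) and $\mathbf{A}$ (assembled from the first $M$ snapshots and the fixed truncation rank $r$) are fixed matrices once the trajectory and $M$ are prescribed, so $\varepsilon_M$ is genuinely a constant determined solely by $M$. The hypothesis $k \geq M$ enters only to guarantee that step $k$ lies in the prediction regime, while the uniformity over all such $k$ is inherited directly from Lemma~\ref{lem:initial_control}.

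The main obstacle is controlling $\|\mathbf{K}-\mathbf{A}\|_2$ and confirming its dependence on $M$. The delicate point is that $\mathbf{A}$ is not the raw least-squares fit $\mathbf{Y}_2\mathbf{Y}_1^{\dagger}$ but its rank-$r$ compression obtained from the reduced operator $\tilde{\mathbf{A}} = \mathbf{U}^H \mathbf{Y}_2 \mathbf{V}\mathbf{\Sigma}^{-1}$, lifted back through the basis $\mathbf{U}$. Bounding $\|\mathbf{K}-\mathbf{A}\|_2$ therefore requires splitting the error into (i) the in-sample best-fit residual $\|\mathbf{Y}_2 - \mathbf{A}\mathbf{Y}_1\|_F$, which measures how well a linear model explains the $M$ training pairs, and (ii) the projection error incurred by discarding the trailing singular values of $\mathbf{Y}_1$. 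Both contributions are monotone functions of the singular-value spectrum of the $M$-column snapshot matrix, so they are controlled entirely by $M$ and $r$; assembling them into a single constant closes the argument. Here the stability assumption $\rho(\mathbf{K}) < 1$ is what prevents these residuals from being amplified along the trajectory, mirroring its role in Lemma~\ref{lem:initial_control}.
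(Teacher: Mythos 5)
Your proposal is correct and follows essentially the same route as the source: the paper itself does not prove this lemma (it is quoted from \cite{Prediction2020Lu}), but the structure it attributes to that reference in the proof of Theorem~\ref{thm:local_error} --- namely $\varepsilon_M = \|C_M\|_2^2\,\|\mathbf{u}_0\|_2$ with $C_M$ the difference between the global best-fit operator and the one fitted to the first $M$ snapshots, combined with Lemma~\ref{lem:initial_control} to replace $\|\mathbf{u}_{k-1}\|_2$ by $\|\mathbf{u}_0\|_2$ --- is exactly your decomposition $\mathbf{\tau}^{\text{DMD}}_k=(\mathbf{K}-\mathbf{A})\mathbf{u}_{k-1}$. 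Your remaining sketch of how to bound $\|\mathbf{K}-\mathbf{A}\|_2$ is left at the same level of detail as the paper's own citation of that step, so there is no gap relative to the paper's treatment.
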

	
	From Lemma \ref{lem:initial_control} and Lemma \ref{lem:local_error}, we can deduce the local truncation error bound of our LDMD method.
	
	\begin{theorem}
		\label{thm:local_error}
		Define the local truncation error of the A-LDMD
		\begin{equation*}
			\mathbf{\tau}^{\text{LDMD}}_k=\mathbf{u}_k-\mathbf{u}^{\text{LDMD}}_k(\mathbf{u}_{k-1}).
		\end{equation*}
		Then, for any $k\in [\sum_{j=1}^{i} n_j+\sum^{i-1}_{j=0}c_j m+1,\sum_{j=1}^{i} n_j+\sum^{i}_{j=0}c_j m]$ with $c_0=0$, where the superscript $i$ denotes the stage index, and the constant $\varepsilon_{n_i}^i$ is a constant depending only on the number of snapshots $n_i$ at each stage.
	\end{theorem}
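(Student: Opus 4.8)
The plan is to reduce the claim to Lemma \ref{lem:local_error} applied stage by stage, exploiting the fact that A-LDMD, restricted to any single stage, is nothing but a standard DMD built from that stage's snapshots. First I would fix a stage index $i$ and observe that the A-LDMD prediction on the interval $[t_{i-1},t_i]$ is generated entirely by the stage-local Koopman matrix $\mathbf{K}_i=\hat{\mathbf{Y}}_2^i(\hat{\mathbf{Y}}_1^i)^{\dagger}$ obtained from the $n_i+1$ snapshots $\{\mathbf{y}_0^i,\dots,\mathbf{y}_{n_i}^i\}$. Thus $\hat{\mathbf{Y}}_1^i,\hat{\mathbf{Y}}_2^i$ play exactly the role of $\mathbf{Y}_1,\mathbf{Y}_2$ in the standard algorithm, and the role of the global snapshot count $M$ in Lemma \ref{lem:local_error} is taken by the local count $n_i$.

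Next, because the local truncation error $\mathbf{\tau}^{\text{LDMD}}_k=\mathbf{u}_k-\mathbf{u}^{\text{LDMD}}_k(\mathbf{u}_{k-1})$ is defined with the \emph{exact} predecessor $\mathbf{u}_{k-1}$ as the starting state, it isolates the one-step error of the stage-$i$ model and carries no contamination from earlier stages; the cross-stage accumulation is deferred to the global analysis. Writing the local stage index as $\ell$, so that a global node $k$ corresponds to $\ell\geq n_i$ prediction steps beyond the stage's initial data, Lemma \ref{lem:local_error} then yields directly $\|\mathbf{\tau}^{\text{LDMD}}_k\|_2^2\leq \varepsilon_{n_i}^i$, with $\varepsilon_{n_i}^i$ depending only on $n_i$. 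To invoke the lemma I would first verify its standing hypothesis $\rho(\mathbf{K}_i)<1$ for each stage exactly as in the global setting, using Lemma \ref{lem:initial_control} to conclude that the stage predictions stay bounded by the stage-initial value; shorter stages keep the linearized dynamics contractive, which is precisely what the adaptive segmentation criterion $\Delta_k\leq\epsilon$ is designed to enforce.

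Finally, I would carry out the index bookkeeping converting the per-stage statement into the stated global range. The snapshots consumed through stage $i$ number $\sum_{j=1}^{i}n_j$, while the prediction steps completed before stage $i$ number $\sum_{j=0}^{i-1}c_j m$ with $c_0=0$; hence the first predicted node of stage $i$ is $\sum_{j=1}^{i}n_j+\sum_{j=0}^{i-1}c_j m+1$ and the last is $\sum_{j=1}^{i}n_j+\sum_{j=0}^{i}c_j m$, matching the interval in the statement. For every such $k$ the bound above applies, completing the argument.

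I expect the main obstacle to be making the per-stage reduction to standard DMD fully rigorous—in particular, justifying that the constant $\varepsilon_{n_i}^i$ is genuinely determined by $n_i$ alone rather than by the specific stage data, and that the contractivity hypothesis $\rho(\mathbf{K}_i)<1$ holds in each stage so that Lemma \ref{lem:local_error} may be applied uniformly. The index translation itself is routine; the conceptual crux is the clean separation between local and accumulated error that legitimizes replacing the LDMD-propagated initial condition with the exact predecessor $\mathbf{u}_{k-1}$ in the local error estimate.
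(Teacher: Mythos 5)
Your proposal follows essentially the same route as the paper: the proof there likewise treats each stage as an independent standard DMD with its own Koopman matrix $\mathbf{K}_i$, invokes Lemma \ref{lem:local_error} with the global snapshot count $M$ replaced by the stage count $n_i$, and then performs the index bookkeeping to obtain the stated range of $k$ (the paper additionally records the explicit form $\varepsilon_{n_i}^i=\|C_{n_i}^i\|_2^2\|\mathbf{u}_0\|_2$ and closes by taking $\max_i\{\varepsilon_{n_i}^i\}$). Your extra attention to verifying $\rho(\mathbf{K}_i)<1$ per stage via Lemma \ref{lem:initial_control} is a reasonable refinement of a hypothesis the paper leaves implicit, but it does not change the argument.
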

	
	\begin{proof}
		Since A-LDMD performs an independent DMD algorithm at each stage, we generate the different Koopman matrices $\mathbf{K}_i$ for each stage. Therefore, according to Lemma \ref{lem:local_error}, we have the local truncation error
		\begin{equation*}
			||\mathbf{\tau}^i_k||^2_2\leq \varepsilon_{n_i}^i, \quad k\in [\sum_{j=1}^{i} n_j+\sum^{i-1}_{j=1}c_j m+1,\sum_{j=1}^{i} n_j+\sum^{i}_{j=1}c_j m].
		\end{equation*}
		
		According to the proof of the Lemma \ref{lem:local_error}  and the proof of Theorem 3.3 in \cite{Prediction2020Lu}, we have
		\begin{equation*}
			\varepsilon_{n_i}^i=||C_{n_i}^i||^2_2||\mathbf{u}_0||_2,
		\end{equation*}
		where $||C_{n_i}^i||_2^2$ is the upper bound of
		\begin{equation*}
			(1+\delta)\left\|\underset{\mathbf{A}\in\mathbb{C}^{qN_x\times qN_x}}{\operatorname*{\mathrm{argmin}}}\sum_{k=0}^{n_i+c_i m-1}||\mathbf{A}\mathbf{u}_k-\mathbf{u}_{k+1}||_2^2-\underset{\mathbf{A}\in\mathbb{C}^{qN_x\times qN_x}}{\operatorname*{\mathrm{argmin}}}\sum_{k=0}^{n_i-1}||\mathbf{A}\mathbf{u}_k-\mathbf{u}_{k+1}||_2^2\right\|^2_2,
		\end{equation*}
		where $\delta$ is a constant such that $0<\delta \ll 1$.
		
		Thus, the local truncation error bound for A-LDMD is determined by the maximum error bound over all stages:
		\begin{align*}
			&||\mathbf{\tau}^{\text{LDMD}}_k||^2_2\leq\max_{i}\{||\tau^{i}_k||_2^2\}\leq \max_{i}\{\varepsilon_{n_i}^i\},\\ & k\in [\sum_{j=1}^{i}n_j+\sum^{i-1}_{j=0}c_j m+1,\sum_{j=1}^{i}n_j+\sum^{i}_{j=0}c_j m].
		\end{align*}
	\end{proof}
	
	In short time intervals, the nonlinearity of complex systems is relatively weak. According to Koopman operator theory, the DMD algorithm can more effectively extract feature information from the system dynamics described by (\ref{eq:DMD_1}) for short-term predictions. As a result, the Koopman matrix provides a more precise representation of the system's dynamics over short time scales, leading to tighter error bounds in the local truncation error. Thus, we can reasonably deduce
	\[\max_{i}\{\varepsilon_{n_i}^i\}\leq\varepsilon_M,\]
	and
	\[||\mathbf{\tau}^{\text{LDMD}}_k||^2_2\leq||\mathbf{\tau}^{\text{DMD}}_k||^2_2.\]
	
	\begin{lemma}[\cite{Prediction2020Lu}]\label{lem:global_error}
		Define the global truncation error
		\begin{equation*}
			\mathbf{e}^{\text{DMD}}_k=\mathbf{u}_k-\mathbf{u}^{\text{DMD}}_k.
		\end{equation*}
		Then, for any $k\geq M$,
		\begin{equation}\label{analysis_4}    ||\mathbf{e}^{\text{DMD}}_k||_2<||\mathbf{\Phi}||_2||\mathbf{\Phi}^{-1}||_2[||\mathbf{e}_M||_2+(k-M)\varepsilon_M],
		\end{equation}
		where $||\mathbf{e}_M||_2$ is fixed and minimal \cite{Dynamic2010Schmid}.
	\end{lemma}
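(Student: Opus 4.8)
The plan is to derive a one-step recursion that ties the global error $\mathbf{e}^{\text{DMD}}_k$ to the local truncation error $\mathbf{\tau}^{\text{DMD}}_k$, unroll it back to the training boundary $k=M$, and then control the accumulated error through the spectral properties of the DMD operator $\mathbf{K}$. First I would make the link between the two error notions explicit. Writing the DMD advance as $\mathbf{u}^{\text{DMD}}_k=\mathbf{K}\mathbf{u}^{\text{DMD}}_{k-1}$ and recalling that the local truncation error advances the \emph{exact} state by one step, so that $\mathbf{\tau}^{\text{DMD}}_k=\mathbf{u}_k-\mathbf{K}\mathbf{u}_{k-1}$, a direct subtraction yields
\begin{equation*}
\mathbf{e}^{\text{DMD}}_k=\mathbf{u}_k-\mathbf{K}\mathbf{u}^{\text{DMD}}_{k-1}=(\mathbf{u}_k-\mathbf{K}\mathbf{u}_{k-1})+\mathbf{K}(\mathbf{u}_{k-1}-\mathbf{u}^{\text{DMD}}_{k-1})=\mathbf{\tau}^{\text{DMD}}_k+\mathbf{K}\mathbf{e}^{\text{DMD}}_{k-1}.
\end{equation*}
This is the familiar ``global error equals fresh local error plus propagated old global error'' identity.

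Next I would iterate this recursion from $k$ down to the training boundary $M$, collecting the propagated local contributions into a geometric sum:
\begin{equation*}
\mathbf{e}^{\text{DMD}}_k=\mathbf{K}^{k-M}\mathbf{e}_M+\sum_{j=M+1}^{k}\mathbf{K}^{k-j}\,\mathbf{\tau}^{\text{DMD}}_j,
\end{equation*}
which has exactly $k-M$ summands. The operator powers are then bounded via the eigendecomposition $\mathbf{K}=\mathbf{\Phi}\mathbf{\Lambda}\mathbf{\Phi}^{-1}$ supplied by the DMD algorithm, giving $\mathbf{K}^n=\mathbf{\Phi}\mathbf{\Lambda}^n\mathbf{\Phi}^{-1}$, so that submultiplicativity together with $\|\mathbf{\Lambda}^n\|_2=\rho(\mathbf{K})^n$ yields $\|\mathbf{K}^n\|_2\le\|\mathbf{\Phi}\|_2\|\mathbf{\Phi}^{-1}\|_2\,\rho(\mathbf{K})^n$. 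Since $\rho(\mathbf{K})<1$ forces $\rho(\mathbf{K})^n\le 1$ for all $n\ge 0$, every power is uniformly bounded by the eigenvector condition number $\|\mathbf{\Phi}\|_2\|\mathbf{\Phi}^{-1}\|_2$. Applying the triangle inequality, inserting this uniform bound, and invoking Lemma \ref{lem:local_error} to replace each $\|\mathbf{\tau}^{\text{DMD}}_j\|_2$ (for $j\ge M$) by $\varepsilon_M$ collapses the sum into $(k-M)$ identical terms and produces exactly
\begin{equation*}
\|\mathbf{e}^{\text{DMD}}_k\|_2\le\|\mathbf{\Phi}\|_2\|\mathbf{\Phi}^{-1}\|_2\big[\|\mathbf{e}_M\|_2+(k-M)\varepsilon_M\big].
\end{equation*}

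The main obstacle is precisely the uniform control of $\|\mathbf{K}^{n}\|_2$ across all prediction steps: this is where the hypothesis $\rho(\mathbf{K})<1$ and the diagonalizability of $\mathbf{K}$ are essential, and where the non-normality of $\mathbf{K}$ prevents the prefactor from collapsing to one, leaving the condition number $\|\mathbf{\Phi}\|_2\|\mathbf{\Phi}^{-1}\|_2$ in front of the bound. (The strict inequality in \eqref{analysis_4} then propagates from the strict spectral bound $\rho(\mathbf{K})^n<1$ for $n\ge1$ and the strict local estimate.) Finally, the assertion that $\|\mathbf{e}_M\|_2$ is ``fixed and minimal'' is not derived from the recursion but follows from the best-fit least-squares construction of $\mathbf{K}$, which minimizes the reconstruction error at the training-to-prediction seam; I would simply cite this property rather than re-establish it.
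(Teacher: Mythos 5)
The paper does not prove this lemma; it is imported verbatim from \cite{Prediction2020Lu}, and your argument --- the one-step identity $\mathbf{e}^{\text{DMD}}_k=\mathbf{\tau}^{\text{DMD}}_k+\mathbf{K}\mathbf{e}^{\text{DMD}}_{k-1}$, unrolled to $k=M$ and stabilized via $\|\mathbf{K}^n\|_2\le\|\mathbf{\Phi}\|_2\|\mathbf{\Phi}^{-1}\|_2\rho(\mathbf{K})^n\le\|\mathbf{\Phi}\|_2\|\mathbf{\Phi}^{-1}\|_2$ --- is precisely the standard consistency-plus-stability proof used in that reference, and it is correct. The only wrinkle, inherited from the paper's own statements rather than introduced by you, is that Lemma \ref{lem:local_error} bounds $\|\mathbf{\tau}^{\text{DMD}}_j\|_2^2$ (not $\|\mathbf{\tau}^{\text{DMD}}_j\|_2$) by $\varepsilon_M$, so a literal application of that lemma would put $(k-M)\sqrt{\varepsilon_M}$ in the sum; you silently adopt the unsquared reading that \eqref{analysis_4} itself presupposes.
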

	
	Assume that the dynamical system (\ref{eq:DMD_1}) is stable and well-posed, then the solution depends continuously on the initial conditions as Lemma \ref{lem:initial_depend_continuously}.
	
	\begin{lemma}
		\label{lem:initial_depend_continuously}
		\begin{equation}\label{eq:analysis_2}
			||\mathbf{u}(t,\mathbf{x})-\mathbf{w}(t,\mathbf{x})||_2\leq L||\mathbf{u}_0(\mathbf{x})-\mathbf{w}_0(\mathbf{x})||_2,\quad \forall t\in [t_0,T],
		\end{equation}
		where $\mathbf{u}(t,\mathbf{x}),\mathbf{w}(t,\mathbf{x})$ are solutions of (\ref{eq:DMD_1}) corresponding to different initial conditions $\mathbf{u}_0(\mathbf{x})$, $\mathbf{w}_0(\mathbf{x})$, respectively, and $L$ is a constant.
	\end{lemma}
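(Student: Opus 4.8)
The plan is to establish the Lipschitz-type bound \eqref{eq:analysis_2} by recasting the governing equation \eqref{eq:DMD_1} in integral form and applying Grönwall's inequality. The well-posedness hypothesis guarantees that the right-hand side $\mathbf{f}$ is Lipschitz continuous in its arguments; after spatial discretization the difference operators $D, D^2,\ldots$ are bounded linear maps, so the composite map $\mathbf{u}\mapsto\mathbf{f}(\mathbf{u},D\mathbf{u},D^2\mathbf{u},\ldots)$ inherits a global Lipschitz constant $L_f$ with respect to $\|\cdot\|_2$ on the relevant bounded set of states, boundedness being supplied by Lemma \ref{lem:initial_control}.

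First I would let $\mathbf{u}(t,\mathbf{x})$ and $\mathbf{w}(t,\mathbf{x})$ denote the solutions of \eqref{eq:DMD_1} issuing from $\mathbf{u}_0(\mathbf{x})$ and $\mathbf{w}_0(\mathbf{x})$, and integrate \eqref{eq:DMD_1} in time to obtain the equivalent integral equation
\begin{equation*}
	\mathbf{u}(t,\mathbf{x})=\mathbf{u}_0(\mathbf{x})+\int_{t_0}^{t}\mathbf{f}\big(\mathbf{u}(s,\mathbf{x}),D\mathbf{u}(s,\mathbf{x}),\ldots\big)\,ds,
\end{equation*}
and likewise for $\mathbf{w}$. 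Subtracting and writing $\mathbf{e}(t,\mathbf{x})=\mathbf{u}(t,\mathbf{x})-\mathbf{w}(t,\mathbf{x})$ gives
\begin{equation*}
	\mathbf{e}(t,\mathbf{x})=\big(\mathbf{u}_0(\mathbf{x})-\mathbf{w}_0(\mathbf{x})\big)+\int_{t_0}^{t}\big[\mathbf{f}(\mathbf{u}(s,\mathbf{x}),\ldots)-\mathbf{f}(\mathbf{w}(s,\mathbf{x}),\ldots)\big]\,ds.
\end{equation*}
Taking the $L^2$-norm and invoking the Lipschitz estimate on $\mathbf{f}$ yields the integral inequality
\begin{equation*}
	\|\mathbf{e}(t,\mathbf{x})\|_2\leq\|\mathbf{u}_0(\mathbf{x})-\mathbf{w}_0(\mathbf{x})\|_2+L_f\int_{t_0}^{t}\|\mathbf{e}(s,\mathbf{x})\|_2\,ds.
\end{equation*}

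Then I would apply Grönwall's inequality to deduce
\begin{equation*}
	\|\mathbf{e}(t,\mathbf{x})\|_2\leq\|\mathbf{u}_0(\mathbf{x})-\mathbf{w}_0(\mathbf{x})\|_2\,e^{L_f(t-t_0)}\leq L\,\|\mathbf{u}_0(\mathbf{x})-\mathbf{w}_0(\mathbf{x})\|_2,
\end{equation*}
with $L=e^{L_f(T-t_0)}$, which is finite because $t\in[t_0,T]$ is bounded; this is exactly \eqref{eq:analysis_2}.

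The step I expect to be the main obstacle is justifying the global Lipschitz continuity of $\mathbf{f}$ in the $L^2$-norm, since $\mathbf{f}$ depends on spatial derivatives of $\mathbf{u}$ through $D, D^2,\ldots$. In the continuous PDE setting this would demand energy estimates or semigroup arguments together with control of higher Sobolev norms; in the discretized system actually used here the derivative operators are finite-dimensional matrices of bounded operator norm, so $L_f$ simply absorbs these norms and the difficulty collapses onto the standing well-posedness hypothesis and the a priori boundedness of trajectories furnished by Lemma \ref{lem:initial_control}.
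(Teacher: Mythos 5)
The paper offers no proof of Lemma \ref{lem:initial_depend_continuously}: it is stated as an immediate consequence of the standing assumption that system (\ref{eq:DMD_1}) is ``stable and well-posed,'' i.e.\ continuous dependence on initial data is taken as part of the definition of well-posedness in the sense of Hadamard, with $L$ left as an unspecified constant. Your Gr\"onwall argument therefore supplies a genuine derivation where the paper has only an assertion, and it has the added value of identifying the constant explicitly as $L=e^{L_f(T-t_0)}$ and isolating exactly what must be assumed for the bound to hold (a Lipschitz estimate for the discretized right-hand side on the set visited by the trajectories). The argument is correct under those hypotheses, and you are right that in the semi-discrete setting the difference operators are bounded matrices so the Lipschitz constant simply absorbs their norms. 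One small caveat: you invoke Lemma \ref{lem:initial_control} to furnish the a priori boundedness of trajectories, but that lemma concerns the boundedness of \emph{DMD prediction iterates} under the spectral-radius condition $\rho(\mathbf{K})<1$, not of the exact solutions $\mathbf{u}$ and $\mathbf{w}$ of (\ref{eq:DMD_1}); the boundedness you actually need should instead be drawn from the well-posedness assumption itself (or from an energy estimate for the FOM), which is the same hypothesis the paper leans on.
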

	
	From Lemma \ref{lem:global_error} and Lemma \ref{lem:initial_depend_continuously}, we can deduce the global truncation error bound of LDMD.
	
	\begin{theorem}\label{thm:global_error}
		Define the global truncation error of the A-LDMD as
		\begin{equation*}
			\mathbf{e}^{\text{LDMD}}_k=\mathbf{u}_k-\mathbf{u}^{\text{LDMD}}_k.
		\end{equation*}
		Then, for any $k\in [\sum_{j=1}^i n_j+\sum^{i-1}_{j=0}c_j m+1,\sum_{j=1}^i n_j+\sum^{i}_{j=0}c_j m]$ with $c_0=0$,  the following upper bound holds:
		\begin{equation*}
			||\mathbf{e}^{\text{LDMD},i}_k||_2<\sum_{p=1}^{i}L^{i-p}||\mathbf{\Phi}_{p}||_2||\mathbf{\Phi}_{p}^{-1}||_2(||\mathbf{e}^{p}_{n_{p}}||_2+k^p\varepsilon_{n_p}^p),
		\end{equation*}
		where $||\mathbf{e}_{n_i}^i||_2$ and $||\mathbf{e}_{n_p}^p||_2$ are fixed and minimal, $k^p$ represents the number of predicted time steps in the $p$-th stage.
	\end{theorem}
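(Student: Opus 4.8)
The plan is to prove the bound by induction on the stage index $i$, exploiting the sequential structure of A-LDMD in which the terminal LDMD state of stage $i-1$ becomes the initial condition used to regenerate the FOM snapshots for stage $i$. The two tools are Lemma \ref{lem:global_error}, which controls a single stage's DMD error relative to its own reference trajectory, and Lemma \ref{lem:initial_depend_continuously}, which transports an initial-condition error forward in time with amplification at most $L$. For the base case $i=1$, the snapshots are drawn directly from the exact initial dataset, so the reference trajectory for the stage-$1$ DMD is the true solution $\mathbf{u}_k$; Lemma \ref{lem:global_error} then yields directly
\[
||\mathbf{e}^{\text{LDMD},1}_k||_2 < ||\mathbf{\Phi}_1||_2||\mathbf{\Phi}_1^{-1}||_2\left(||\mathbf{e}^1_{n_1}||_2 + k^1\varepsilon_{n_1}^1\right),
\]
which is exactly the claimed formula at $i=1$, since the sum collapses to the single term $p=1$ with $L^{0}=1$.

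For the inductive step, the key device is a two-term error decomposition. Let $\tilde{\mathbf{u}}^i_k$ denote the FOM trajectory of stage $i$ launched from the \emph{perturbed} initial condition $\mathbf{u}^{\text{LDMD},i-1}(t_{i-1},\mathbf{x})$; this is precisely the reference trajectory that the stage-$i$ DMD attempts to reproduce. Writing
\[
\mathbf{u}_k - \mathbf{u}^{\text{LDMD},i}_k = \left(\mathbf{u}_k - \tilde{\mathbf{u}}^i_k\right) + \left(\tilde{\mathbf{u}}^i_k - \mathbf{u}^{\text{LDMD},i}_k\right),
\]
the first bracket is the discrepancy between the exact solution and the FOM started from a wrong initial value, so Lemma \ref{lem:initial_depend_continuously} bounds it by $L\,||\mathbf{e}^{\text{LDMD},i-1}_{k}||_2$ evaluated at the terminal index of stage $i-1$, while the second bracket is the genuine single-stage DMD error against its own FOM reference and is controlled by Lemma \ref{lem:global_error} as $||\mathbf{\Phi}_i||_2||\mathbf{\Phi}_i^{-1}||_2(||\mathbf{e}^i_{n_i}||_2 + k^i\varepsilon_{n_i}^i)$. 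Applying the induction hypothesis to the inherited term, multiplying its sum by $L$ (which raises every exponent $L^{i-1-p}$ to $L^{i-p}$) and absorbing the fresh stage-$i$ contribution as the $p=i$ summand with $L^{i-i}=1$, the two pieces telescope into the single sum $\sum_{p=1}^{i} L^{i-p}||\mathbf{\Phi}_p||_2||\mathbf{\Phi}_p^{-1}||_2(||\mathbf{e}^p_{n_p}||_2 + k^p\varepsilon_{n_p}^p)$, completing the induction.

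The main obstacle I anticipate is the honest justification of this decomposition, in particular the claim that Lemma \ref{lem:global_error} applies verbatim to the second bracket. That lemma is stated for a DMD fit to snapshots of an exact trajectory, whereas here the stage-$i$ snapshots come from $\tilde{\mathbf{u}}^i_k$, which is itself perturbed; one must argue that the per-stage constants $\varepsilon_{n_i}^i$ and $||\mathbf{e}^i_{n_i}||_2$ are those associated with this perturbed reference and that the single-stage amplification of the inherited error is governed by the same Lipschitz constant $L$ irrespective of the stage length $n_i+c_i m$. A second, more clerical difficulty is reconciling the local step count $k^p$ (the number of predicted steps within stage $p$) with the global index $k$ appearing in the stated interval $[\sum_{j=1}^{i}n_j+\sum_{j=0}^{i-1}c_j m+1,\ \sum_{j=1}^{i}n_j+\sum_{j=0}^{i}c_j m]$, so that the exponents and summand indices line up consistently across stages.
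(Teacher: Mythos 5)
Your proposal is correct and follows essentially the same route as the paper: the same two-term decomposition (single-stage DMD error against its own perturbed FOM reference, bounded via Lemma \ref{lem:global_error}, plus inherited initial-condition error transported by Lemma \ref{lem:initial_depend_continuously} with factor $L$), followed by unrolling the recursion, which is just your induction written iteratively. The caveat you raise about applying Lemma \ref{lem:global_error} to snapshots of a perturbed trajectory is a real subtlety that the paper's proof also passes over without comment.
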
 
	\begin{proof}
		The global truncation error of the A-LDMD consists of two components:
		\begin{itemize}
			\item  The DMD prediction error at the current stage, which follows the error bound described in Lemma \ref{lem:global_error}.
			\item The error propagation from previous stages, due to the reliance on previous stage predictions as initial values. 
		\end{itemize}
		To formalize the error propagation, we define the accumulated error from the previous stage as:
		\begin{equation*}
			||\mathbf{e}^{\text{LDMD},i-1}_k||_2:=||\mathbf{E}^{\text{LDMD},i-1}||_2,
		\end{equation*}
		where $\mathbf{E}^{\text{LDMD},i-1}$ represents the LDMD prediction error at the end of stage $i-1$, serving as the initial condition for stage $i$.
		
		Applying Lemma \ref{lem:global_error} and Lemma \ref{lem:initial_depend_continuously}, we establish the following bound for the global truncation error at stage $i$:
		\begin{equation}\label{eq:global}
			||\mathbf{e}^{\text{LDMD},i}_k||_2<||\mathbf{\Phi}_i||_2||\mathbf{\Phi}_i^{-1}||_2(||\mathbf{e}_{n_i}^i||_2+k^i\varepsilon_{n_i}^i)+L||\mathbf{E}^{\text{LDMD},i-1}||_2,
		\end{equation}
		where
		\begin{equation*}
			k^i=(k-\sum_{j=1}^i n_j-\sum^{i-1}_{j=0}c_j m),
			\quad k\in [\sum_{j=1}^i n_j+\sum^{i-1}_{j=0}c_j m+1,\sum_{j=1}^i n_j+\sum^{i}_{j=0}c_j m].
		\end{equation*}
		Here, $||\mathbf{e}_{n_i}^i||$ represents the error in the DMD reconstruction stage, which is fixed and minimal. $\mathbf{\Phi}_i$ represents the mode approximation of the Koopman matrix $\mathbf{K}_i$. Additionally, the initial error is zero in the first stage, i.e. $||\mathbf{E}^{\text{LDMD},0}||_2=0$.
		
		Performing the recursion in (\ref{eq:global}) iteratively from $i=1$ to $i=N$, we obtain
		
		\begin{equation*}
			||\mathbf{e}^{\text{LDMD},i}_k||_2<\sum_{p=1}^{i}L^{i-p}||\mathbf{\Phi}_{p}||_2||\mathbf{\Phi}_{p}^{-1}||_2(||\mathbf{e}^{p}_{n_{p}}||_2+k^p\varepsilon_{n_p}^p),
		\end{equation*}
		where $k^p=c_p m$ with $p=1,2,\cdots,i-1$. This completes the proof.
	\end{proof}

	\section{Numerical results}\label{sec:section_numerical}
	In this section, we perform four numerical experiments on complex systems.  Under the identical observable function, SVD truncation, and prediction rate, we exhibit that our LDMD method achieves better accuracy than the standard DMD method, POD-RBF, and time-dependent variants of DMD (mrDMD or HODMD).
As noted in Remark \ref{remark_observation}, selecting appropriate observables generally requires domain expertise; therefore, we set 
$g(\mathbf{u})=\mathbf{u}$ for the first three cases (i.e., using the identity mapping).
~In the algorithm implementation process, we evaluate whether the residual exceeds the predefined threshold at every $m$-th step. The $L^2$ relative error (RE) at time $t_k$ is defined as:
\begin{equation*}
	\text{RE}=\frac{||\mathbf{\hat{u}}_k-\mathbf{u}^{ref}_{k}||_2}{||\mathbf{u}^{ref}_{k}||_2},
\end{equation*}
where $\mathbf{\hat{u}}_k$ denotes the reconstructed or predicted solution, and $\mathbf{u}^{\text{ref}}_k$ is the reference solution. The mean $L^2$ relative error (MRE) is given by:
\begin{equation*}
	\text{MRE}=\frac{\sum_{k=1}^{N_t}\text{RE}}{N_t},
\end{equation*}
where $N_t$ is the number of time steps.

\subsection{Burgers’ equation}
We consider the one-dimensional Burgers' equation with Dirichlet boundary conditions, a canonical nonlinear model for shock waves and turbulence.
The following equation can describe it,
\begin{equation}\label{eq:numerical_1}
	\begin{cases}
		{\frac{\partial \mathbf{u}}{\partial t}=-\mathbf{u}\frac{\partial \mathbf{u}}{\partial x}+\mu\frac{\partial^{2}\mathbf{u}}{\partial x^{2}}},\quad \mathbf{x}\in[-L,L],\quad t\in[0,T], 
		\\ \mathbf{u}(0,\mathbf{x})=-\sin(\pi \mathbf{x}),
		\\ \mathbf{u}(t,-L)=\mathbf{u}(t,L)=0,
	\end{cases}
\end{equation}
where $L=1$, $T=1$ and the viscosity coefficient $\mu=0.01$. 
We discretize the spatial domain into $N_x=500$ intervals and the temporal domain into $N_t=2000$ steps. Then, we set
\[
r=20,\quad \epsilon=5\times 10^{-5},\quad n_1=300,\quad m=50.
\]
Therefore, with the number of stages $N=14$, the prediction rate is $\gamma=50\%$, corresponding to $M=1000$ in the standard DMD method. Figure \ref{fig:Burgers1} shows that both the standard DMD and A-LDMD are able to approximate the Burgers’ equation well.

\begin{figure}[htbp]
	\centering
	\subfigure{\includegraphics[width=1\textwidth]{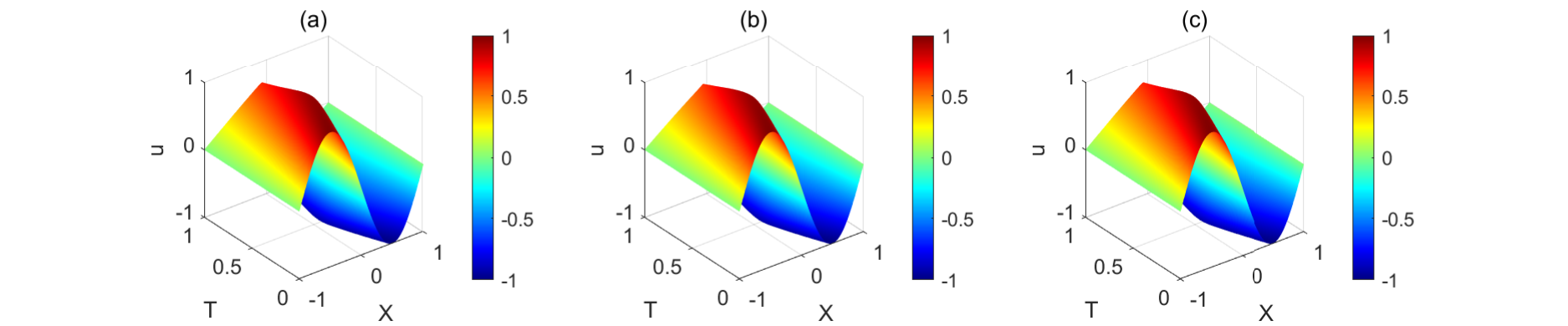}}
	\caption{The solution $\mathbf{u}(t,\mathbf{x})$ of the Burgers’ equation: (a) the reference solution, (b) the standard DMD solution, and (c) the A-LDMD solution.}
	\label{fig:Burgers1}
\end{figure}

\begin{figure}[htbp]
	\centering
	\subfigure{\includegraphics[width=0.4\textwidth,height=1.6in]{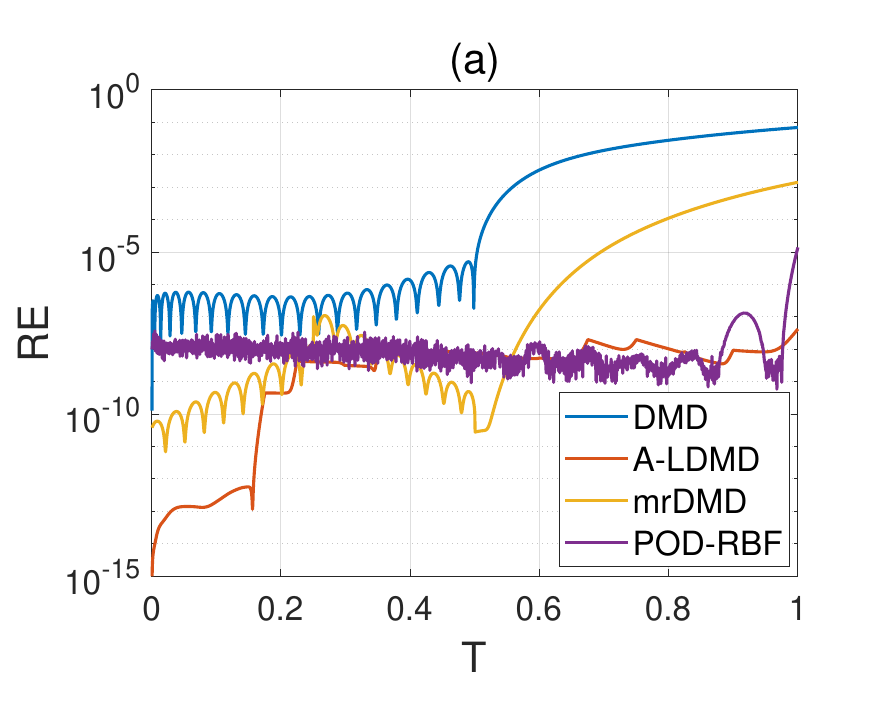}}
	\subfigure{\includegraphics[width=0.4\textwidth,height=1.6in]{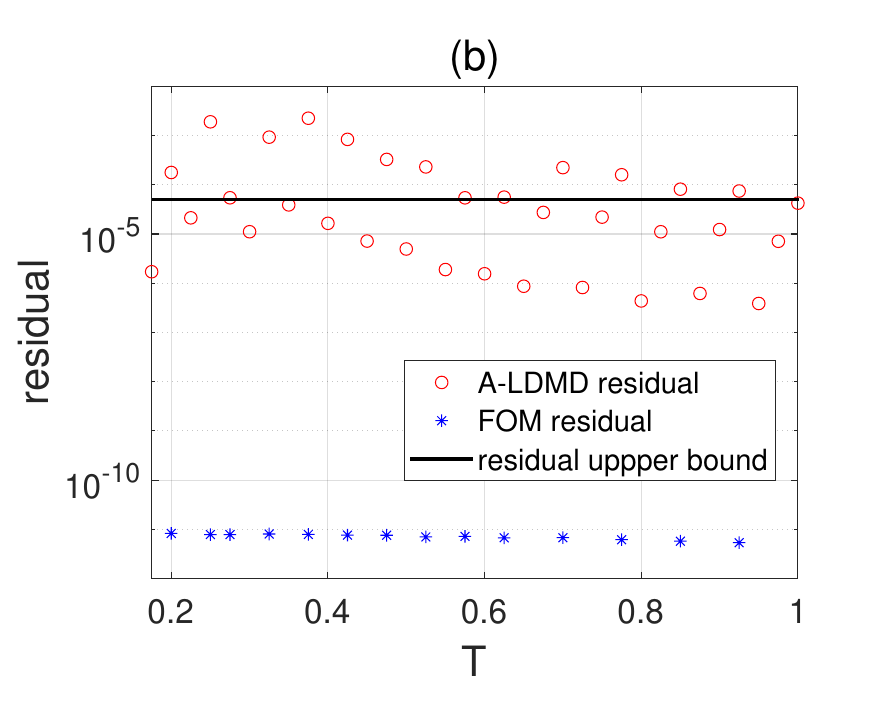}}
	\caption{(a) The $L^2$ relative error of the Burgers' equation for DMD, A-LDMD, mrDMD, and POD-RBF at $\gamma=50\%$. (b) Residual of the A-LDMD solution.}
	\label{fig:Burgers2}
\end{figure}


For a fair comparison and to highlight the advantages of our time-domain segmentation, we consider two representative methods: level-$3$ mrDMD and POD-RBF, with the relevant results shown in Figure \ref{fig:Burgers2} (a). POD-RBF snapshots are taken at the same time instances as A-LDMD, and the POD truncation number is set to $20$ for consistency.




Figure \ref{fig:Burgers2} (a) shows that the computational accuracy of all other approaches is superior to the standard DMD method. It reveals that  A-LDMD achieves the highest reconstruction accuracy, with prediction RE remaining stable between $10^{-10}$ and $10^{-7}$;
mrDMD achieves better reconstruction and short-term prediction but suffers from rapid error growth and high computational cost;
POD-RBF achieves comparable prediction accuracy to A-LDMD but exhibits noticeable fluctuations and faster error growth in the final time intervals. Table \ref{tab:burgers_4} presents the CPU time and MRE, showing that A-LDMD achieves noticeable accuracy improvement with minimal computational cost. This suggests that A-LDMD exhibits superior performance compared to the other methods


Figure \ref{fig:Burgers2} (b) illustrates the A-LDMD time-domain segmentation based on solution residuals. Whenever the residual exceeds the prescribed upper bound, the FOM is adopted to correct the corresponding portion of the prediction. Consequently, the residual equation is applied twice at these correction points, yielding two residual values. As expected, the residual remains relatively large during the prediction steps and grows with each stage of prediction until it surpasses the threshold. Once the FOM correction is applied, the residual is significantly reduced.


\begin{table}[h]
	\centering
	\caption{Comparison of CPU time and the mean $L^2$ relative error of the Burgers' equation for FOM, DMD, A-LDMD, mrDMD, and POD-RBF at $\gamma=50\%$.}
	\label{tab:burgers_4}
	\setlength{\extrarowheight}{2pt}
	\begin{tabular}{c|c|c}
		\Xhline{1pt}
		Model & CPU time(s) & MRE \\
		\hline
		FOM & 3.6716 & / \\
		\hline
		DMD & 1.8933 & $0.0125$ \\
		\hline
		A-LDMD & 2.0958 & $6.4081\times 10^{-9}$ \\
		\hline
		mrDMD & 2.2297 & $1.2127\times 10^{-4}$ \\
		\hline
		POD-RBF & 1.9047 & $8.4582\times10^{-8}$ \\
		\Xhline{1pt}
	\end{tabular}
\end{table}

\color{black}
\subsubsection{Different prediction rates}
To verify the impact of the prediction rate on prediction performance, we compare the prediction performance of A-LDMD and the standard DMD method at different prediction rates ($\gamma=40\%, 50\%, 60\%$). For $\gamma = 50\%$, we maintain the parameter configuration as above. For $\gamma = 40\%$, we set
\[
r=20,\quad \epsilon=10^{-5},\quad n_1=400,\quad m=50.
\]
{This corresponds to $N = 14$ stages}, which is equivalent to $M = 1200$ in the standard DMD method. For $\gamma = 60\%$, we set
\[
r = 15, \quad \epsilon = 10^{-3}, \quad n_1 = 200, \quad m = 50.
\]
{This corresponds to $N = 13$ stages}, which is equivalent to $M = 800$ in the standard DMD method.

\begin{figure}[htbp]
	\centering
	\subfigure{\includegraphics[width=0.32\textwidth,height=1.28in]{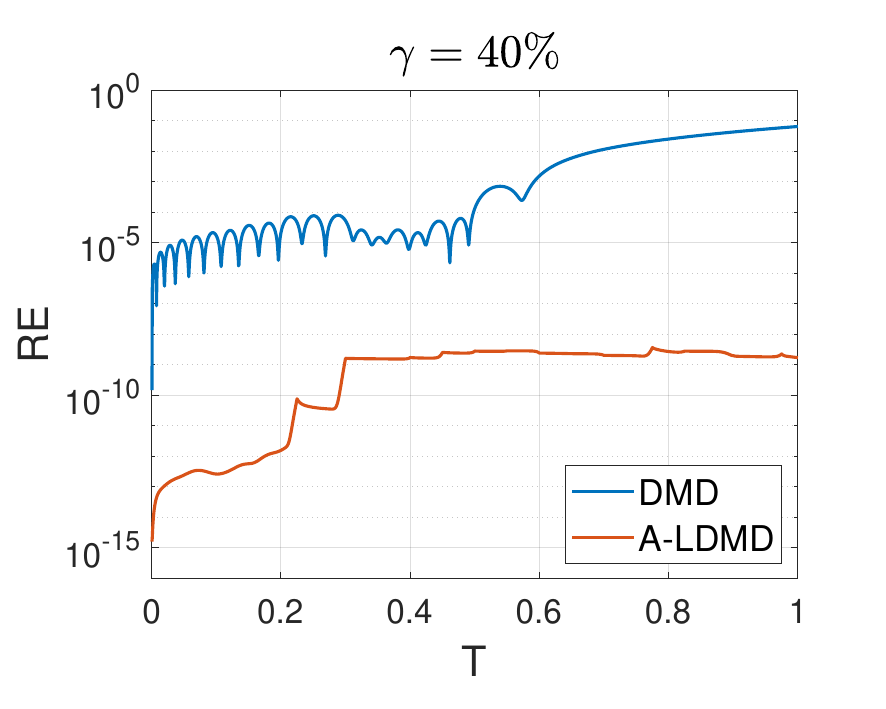}}
	\subfigure{\includegraphics[width=0.32\textwidth,height=1.28in]{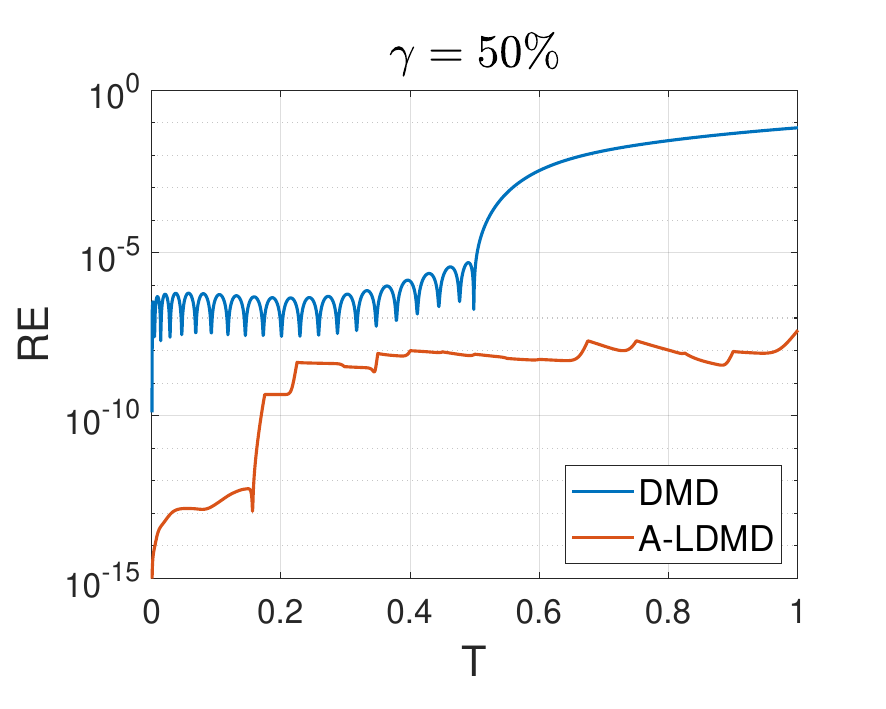}}
	\subfigure{\includegraphics[width=0.32\textwidth,height=1.28in]{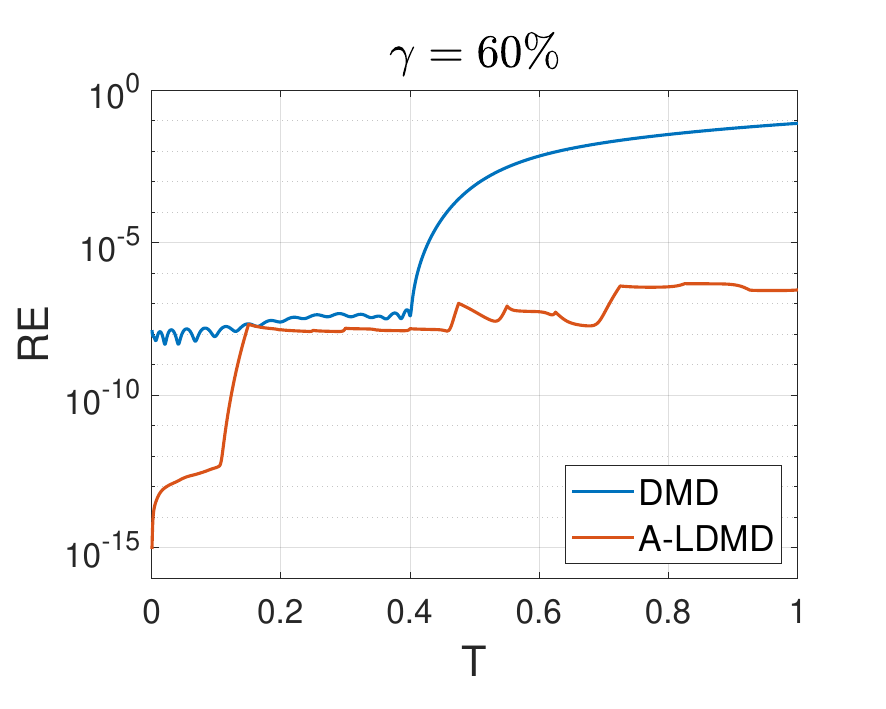}}
	\caption{The $L^2$ relative error across different prediction rates. From left to right, prediction rates $\gamma$ are $40\%,~50\%,~60\%$ respectively.}
	\label{fig:Burgers3}
\end{figure}

Figure \ref{fig:Burgers3} illustrates that A-LDMD consistently outperforms the standard DMD method in different prediction rates. Notably, smaller prediction rates yield improved accuracy, and the performance improvement of A-LDMD becomes more pronounced under a well-chosen residual error bound and time-window size are selected. 

\begin{table}[h]
\centering
\caption{Comparison of CPU time and the mean $L^2$ relative error of the Burgers' equation for FOM, DMD, and A-LDMD at different prediction rates.}
\label{tab:Burgers}
\setlength{\extrarowheight}{2pt}
\begin{tabular}{c|c|c|c}
	\Xhline{1pt}
	\multicolumn{2}{c|}{Model} & CPU time(s) & MRE \\
	\hline
	\multicolumn{2}{c|}{FOM} & 3.6716 & / \\
	\hline
	\multirow{2}{*}{$\gamma=40\%$} & DMD & 2.2748 & 0.0111 \\ 
	\cline{2-4}
	& A-LDMD & 2.4081 & $1.5760\times 10^{-9}$ \\ 
	\hline
	\multirow{2}{*}{$\gamma=50\%$} & DMD & 1.8933 & 0.0125 \\ 
	\cline{2-4}
	& A-LDMD & 2.0958 & $6.4081\times 10^{-9}$ \\ 
	\hline
	\multirow{2}{*}{$\gamma=60\%$} & DMD & 1.4352 & 0.0157 \\ 
	\cline{2-4}
	& A-LDMD & 1.6132 & $1.1922\times 10^{-7}$ \\ 
	\Xhline{1pt}
\end{tabular}
\end{table}
Table \ref{tab:Burgers} shows that the MRE of both DMD and A-LDMD improves as the prediction rate decreases, at the cost of higher computational expense due to additional FOM data. A-LDMD achieves significantly higher accuracy than DMD, with only a slight increase in CPU time.


\subsubsection{Different residual error thresholds}
To assess the influence of the residual error threshold on predictive performance, we compare the results of A-LDMD with those of the standard DMD method under varying residual error thresholds, while keeping all other settings consistent with the case of $\gamma=50\%$.

\begin{table}[h]
\centering
\caption{Comparison of CPU time and the mean $L^2$ relative error of the Burgers' equation for A-LDMD in different residual error thresholds.}
\label{tab:burgers_2}
\setlength{\extrarowheight}{2pt}
\begin{tabular}{c|c|c}
	\Xhline{1pt}
	Residual error threshold & CPU time(s) & MRE \\
	\hline
	$10^{-2}(\gamma=65\%)$ & 1.6092 & $3.6445\times10^{-6}$ \\
	\hline
	$10^{-4}(\gamma=55\%)$ & 1.8677 & $2.1019\times10^{-8}$ \\
	\hline
	$5\times10^{-5}(\gamma=50\%)$ & 2.0958 & $6.4081\times 10^{-9}$ \\
	\Xhline{1pt}
\end{tabular}
\end{table}

\begin{figure}[htbp]
\centering
\subfigure{\includegraphics[width=0.5\textwidth,height=2.0in]{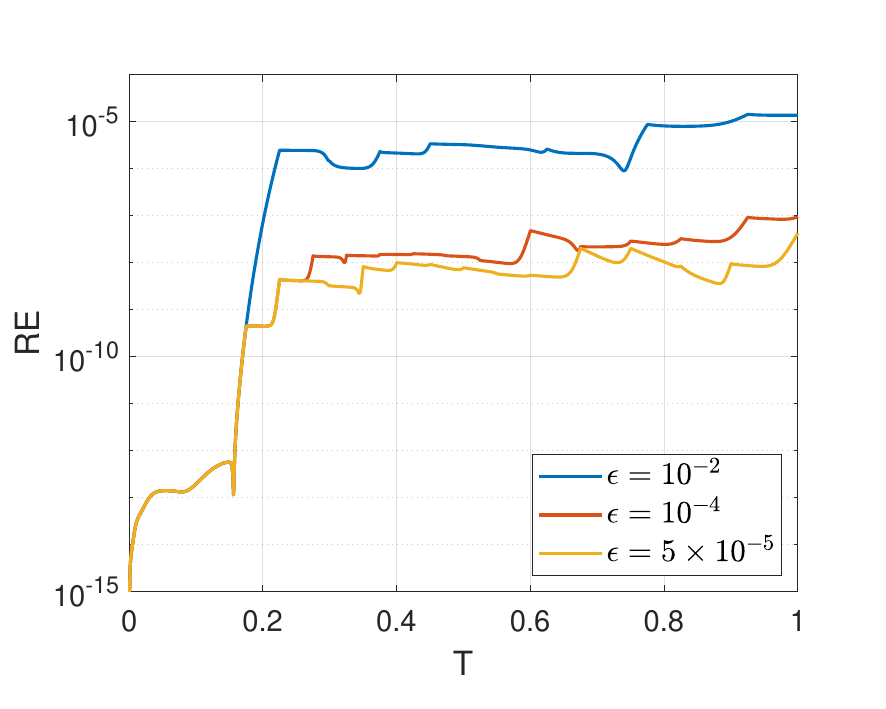}}
\caption{The $L^2$ relative error across different residual error thresholds.}
\label{fig:Burgers4}
\end{figure}

Figure \ref{fig:Burgers4} and Table \ref{tab:burgers_2} demonstrate a clear positive correlation between the residual error threshold $\epsilon$, the prediction rate $\gamma$, and the prediction accuracy, while showing an inverse correlation between $\epsilon$ and the required CPU time. Therefore, with other settings kept unchanged, variations in the residual error thresholds primarily affect the prediction rate and the location of the adaptive segmentation, thereby impacting the prediction performance. 
\begin{figure}[htbp]
\centering
\subfigure{\includegraphics[width=0.8\textwidth]{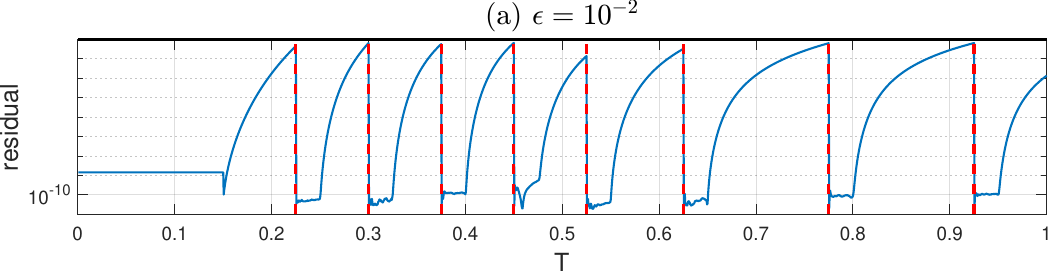}}
\subfigure{\includegraphics[width=0.8\textwidth]{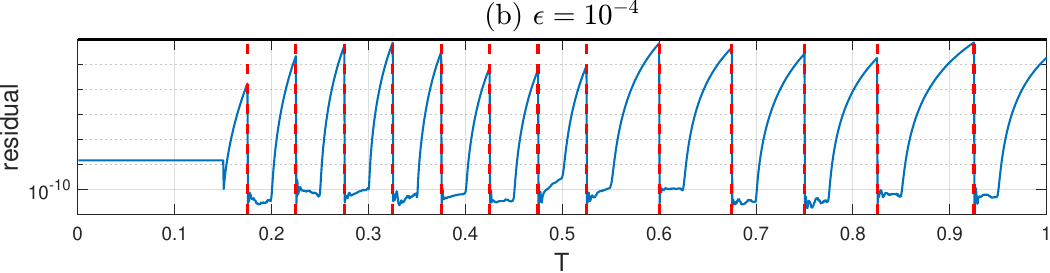}}
\subfigure{\includegraphics[width=0.8\textwidth]{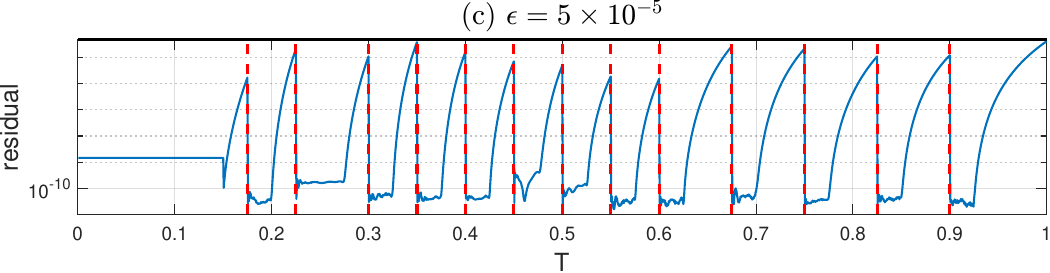}}
\caption{Comparison of the number and location of segments under different residual error thresholds $\epsilon$: (a) $\epsilon=10^{-2}$, (b) $\epsilon=10^{-4}$, and (c) $\epsilon=5\times10^{-5}$. The red dashed lines indicate the locations of the segmentation boundaries.}
\label{fig:Burgers8}
\end{figure}

Figure \ref{fig:Burgers8} illustrates that the residual error threshold $\epsilon$ directly determines the number and location of segments, thereby controlling the number of predictive steps per stage and the overall prediction rate $\gamma$. From Figure \ref{fig:Burgers4}, we can also find that the prediction error with different residual error thresholds $\epsilon$ consistently follows the same pattern: a slight initial increase followed by rapid stabilization at a low level. Moreover, A-LDMD consistently outperforms standard DMD in accuracy with the same prediction rate.

\subsubsection{Comparison of theoretically optimal and adaptive segmentation} 
Related to the theoretically optimal segmentation strategy discussed in Section \ref{sec:section_opt}, we consider here the semi-discrete conservative form of the right-hand side of Equation (\ref{eq:numerical_1}). 
\begin{equation*}
\mathbf{f}(\mathbf{u})=-\frac{1}{2}\mathbf{A}_1(\mathbf{u}\circ\mathbf{u})+\mu\mathbf{A}_2\mathbf{u},
\end{equation*}
where $\mathbf{u}\circ\mathbf{u}=[u_1^2,\cdots,u_{N_x}^2]^T$, $\mathbf{A_1}$, $\mathbf{A_2}$ represent the difference matrices for the second-order term and the first-order term, respectively. We perform the Taylor expansion on $\mathbf{u}$, then the first-order remainder term is $\mathbf{R}(\delta\mathbf{u})=-\frac{1}{2}\mathbf{A}_1(\delta\mathbf{u}\circ\delta\mathbf{u})$.

For opt-LDMD, we set
\[
r=15,\quad\varepsilon=0.5,
\]
which yields $N=16$ stages. Snapshots are collected from the first $50\%$ of time steps within each stage. The opt-LDMD method thus determines segmentation directly based on the predefined error threshold  $\varepsilon$. This segmentation scheme is subsequently adopted in the configuration of A-LDMD, ensuring that both approaches utilize the same number of snapshots per stage. 

For A-LDMD, we select
\[
r=15,\quad \epsilon=1.5\times10^{-5},
\]
where the residual error threshold $\epsilon$ is tuned to achieve $N=16$, thereby matching the number of stages and prediction rate of opt-LDMD. Additionally, we consider a P-LDMD setting, in which the time domain is uniformly segmented into $N=16$ equal segments, with $50\%$ of the data in each stage used as snapshots.

\begin{figure}[htbp]
\centering
\subfigure{\includegraphics[width=0.8\textwidth]{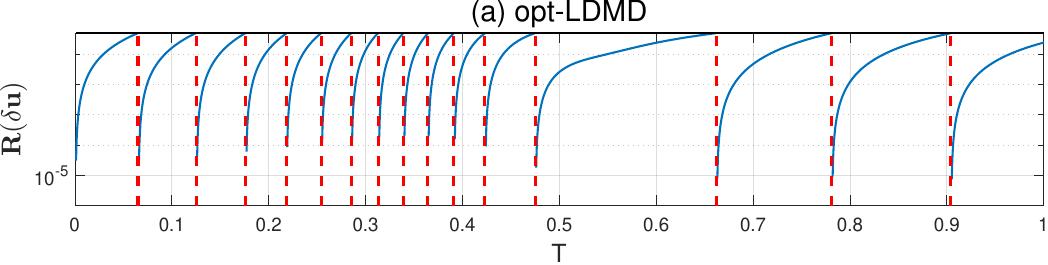}}
\subfigure{\includegraphics[width=0.8\textwidth]{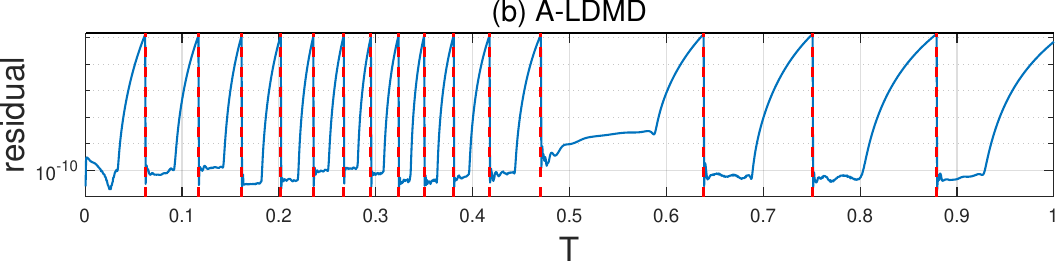}}
\caption{Comparison of remainder and residual evolution for opt-LDMD and A-LDMD: (a) the first-order remainder $\mathbf{R}(\delta\mathbf{u})$ of opt-LDMD and (b) the residuals of A-LDMD. The red dashed lines indicate the locations of the segmentation boundaries.}
\label{fig:Burgers5}
\end{figure}

Figure \ref{fig:Burgers5} demonstrates that opt-LDMD and A-LDMD produce highly similar segmentation structures. In both cases, a higher density of segments is observed during the early time steps, which gradually becomes sparser in the later stages. This trend is consistent with the characteristic evolution of the Burgers’ equation, in which the solution exhibits sharp changes in the initial stage and gradually weakens and stabilizes over time.

\begin{figure}[htbp]
\centering
\subfigure{\includegraphics[width=0.5\textwidth,height=2.0in]{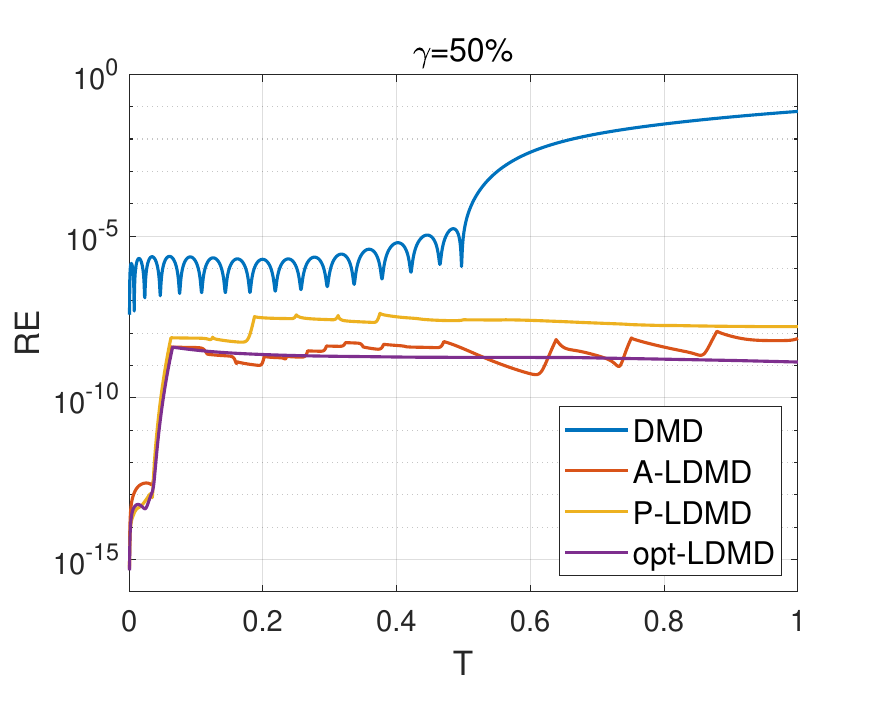}}
\caption{The $L^2$ relative error of the Burgers' equation for DMD, A-LDMD, P-LDMD, and opt-LDMD at $\gamma=50\%$.}
\label{fig:Burgers6}
\end{figure}

\begin{table}[h]
\centering
\caption{Comparison of CPU time and the mean $L^2$ relative error of the Burgers' equation for DMD, P-LDMD, A-LDMD, and opt-LDMD.}
\label{tab:burgers_3}
\setlength{\extrarowheight}{2pt}
\begin{tabular}{c|c}
	\Xhline{1pt}
	Model & MRE \\
	\hline
	DMD & $0.0129$ \\
	\hline
	A-LDMD & $3.2039\times 10^{-9}$ \\
	\hline
	P-LDMD & $1.9413\times 10^{-8}$ \\
	\hline
	opt-LDMD & $1.7486\times10^{-9}$  \\
	\Xhline{1pt}
\end{tabular}
\end{table}

As shown in Figure \ref{fig:Burgers6} and Table \ref{tab:burgers_3}, A-LDMD achieves performance close to that of opt-LDMD. Opt-LDMD maintains stable accuracy after the initial stage, benefiting from its ideal segment-wise linearity based on the full reference solution. Compared with opt-LDMD, P-LDMD exhibits slightly higher errors, while A-LDMD shows marginally larger errors with mild fluctuations. Nevertheless, all LDMD variants consistently outperform standard DMD.


\color{black}
\subsection{Allen-Cahn equation}
We consider the Allen-Cahn equation, a type of reaction-diffusion equation, with Neumann boundary conditions, which has a nonlinear equation source term and is used to describe phase separation and interface motion. It can be expressed as follows:
\begin{equation}\label{eq:numerical_2}
\begin{cases}{\frac{\partial \mathbf{u}}{\partial t}}=\alpha{\frac{\partial^{2}\mathbf{u}}{\partial x^{2}}}+5(\mathbf{u}-\mathbf{u}^3),\quad \mathbf{x}\in[-L,L],\quad t\in[0,T],\\
	\mathbf{u}(0,\mathbf{x})=0.53\mathbf{x} + 0.47\sin(-\frac{3}{2}\pi \mathbf{x}),\\      \left.\frac{\partial \mathbf{u}}{\partial x}\right|_{x=-L}=\left.\frac{\partial \mathbf{u}}{\partial x}\right|_{x=L}=0,\end{cases}
	\end{equation}
	where $L=1$, $T=2$ and the diffusion coefficient $\alpha=10^{-4}$. 
	We discretize the spatial domain into $N_x=200$ intervals and the temporal domain into $N_t=2000$ steps. Then, we set
	\[
	r=15,\quad \epsilon=3\times 10^{-5},\quad n_1=200,\quad m=50.
	\]
	Therefore, {with the number of stages $N=17$}, the prediction rate is $\gamma=50\%$, corresponding to $M=1000$ in the standard DMD method.  Figure \ref{fig:AC1} shows that A-LDMD performs well in approximating the reference solution, whereas the standard DMD method performs relatively poorly in comparison. 
	
	\begin{figure}[htbp]
\centering
\subfigure{\includegraphics[width=1\textwidth]{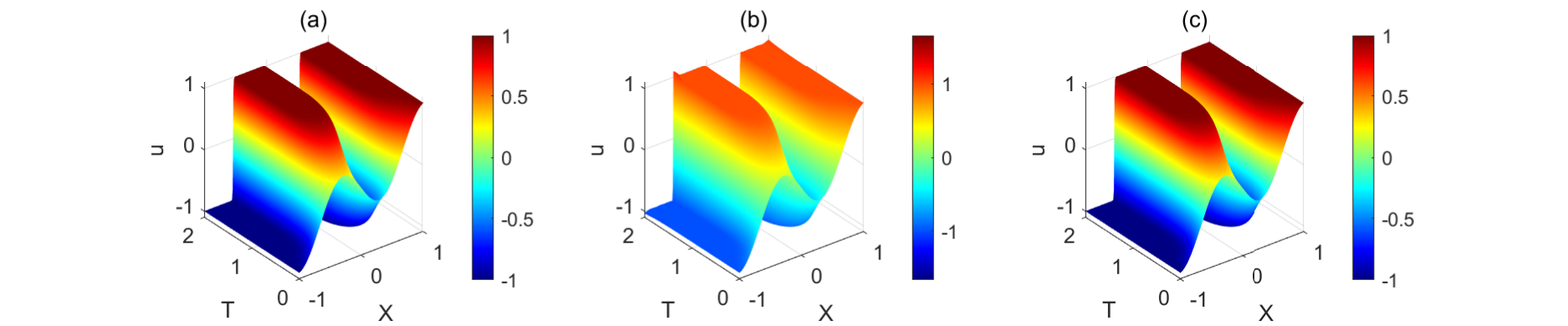}}
\caption{The solution $\mathbf{u}(t,\mathbf{x})$ of the Allen-Cahn equation: (a) the reference solution, (b) the standard DMD solution, and (c) the A-LDMD solution.}
\label{fig:AC1}
\end{figure}

\begin{figure}[htbp]
\centering
\subfigure{\includegraphics[width=0.4\textwidth,height=1.6in]{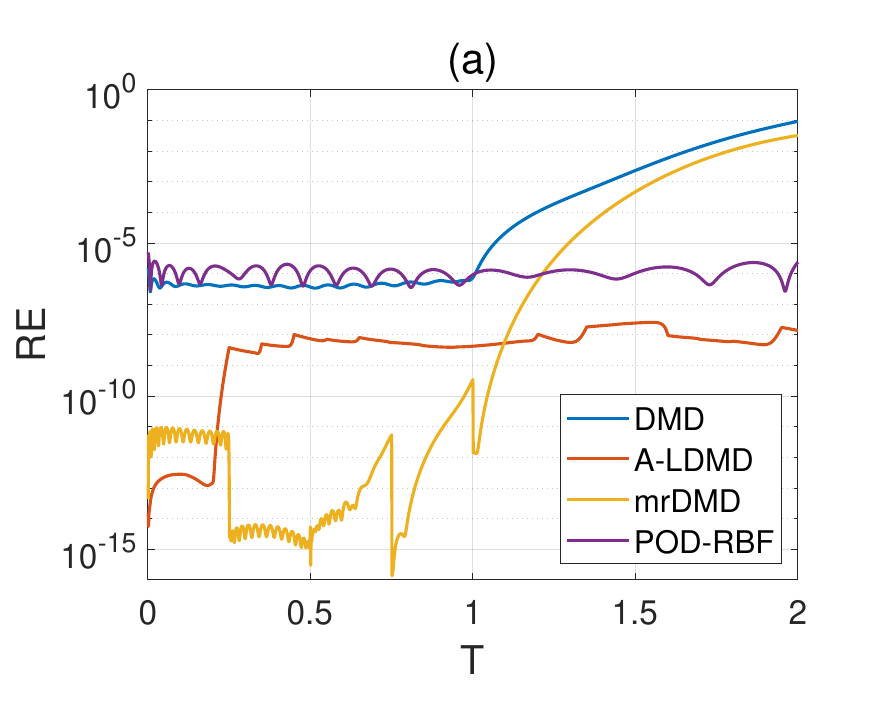}}
\subfigure{\includegraphics[width=0.4\textwidth,height=1.6in]{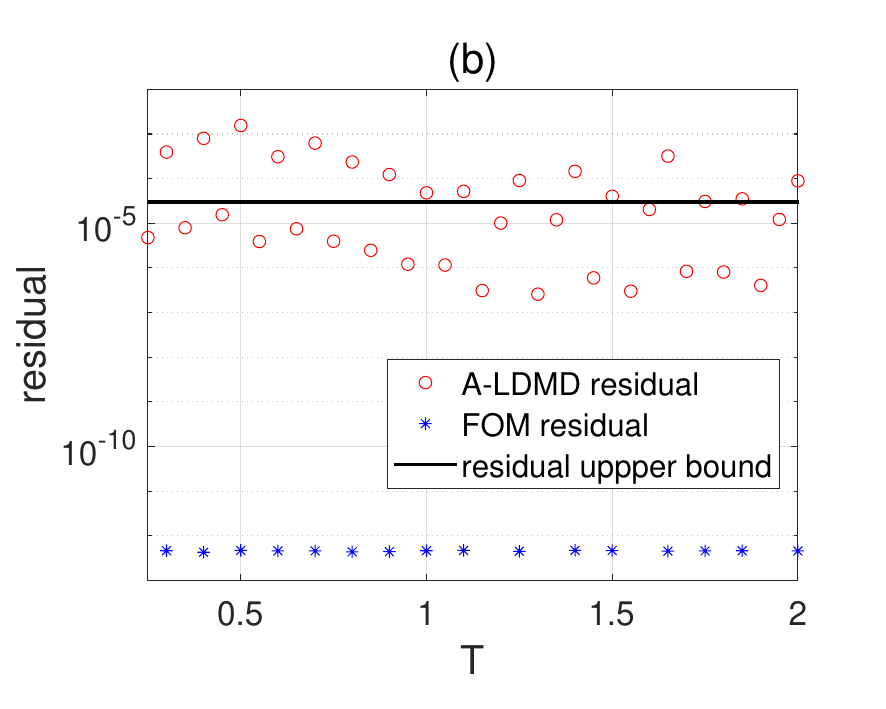}}
\caption{(a) The $L^2$ relative error of the Allen-Cahn equation for DMD, A-LDMD, mrDMD, and POD-RBF at $\gamma=50\%$. (b) Residual of the A-LDMD solution.}
\label{fig:AC2}
\end{figure}

As summarized in Table \ref{tab:AC_1}, A-LDMD delivers the highest predictive accuracy and computational efficiency among all methods. Figure \ref{fig:AC2} (a) shows that the RE of A-LDMD is quickly stable and stays at a low level within the range of $10^{-10}$ to $10^{-7}$. It consistently outperforms the standard DMD method and POD-RBF. The mrDMD demonstrates a similar performance to Figure \ref{fig:Burgers2} (a) due to a lack of robust extrapolation mechanisms, which leads to its prediction result being close to the standard DMD. Moreover, Figure \ref{fig:AC2} (b) reaches a conclusion similar to that in Figure \ref{fig:Burgers2} (b).

\begin{table}[h]
\centering
\caption{Comparison of CPU time and the mean $L^2$ relative error of the Allen-Cahn equation for FOM, DMD, A-LDMD, mrDMD, and POD-RBF at $\gamma=50\%$.}
\label{tab:AC_1}
\setlength{\extrarowheight}{2pt}
\begin{tabular}{c|c|c}
	\Xhline{1pt}
	Model & CPU time(s) & MRE \\
	\hline
	FOM & 52.3481 & / \\
	\hline
	DMD & $26.7059$ & $0.0076$ \\
	\hline
	A-LDMD & $23.0213$ & $7.5795\times 10^{-9}$ \\
	\hline
	mrDMD & $26.8274$ & $0.0027$ \\
	\hline
	POD-RBF & $26.5832$ & $1.1771\times 10^{-6}$ \\
	\Xhline{1pt}
\end{tabular}
\end{table}


\subsection{Nonlinear Schrödinger equation}
We consider one of the fundamental equations in quantum mechanics—the nonlinear Schrödinger equation (NLSE):
\begin{equation}\label{eq:numerical_3}
\begin{cases}\mathbf{\psi}_{t}-i\theta \mathbf{\psi}_{xx}-i\theta|\mathbf{\psi}|^{2}\mathbf{\psi}=0,\quad \mathbf{x}\in[-L,L],\quad t\in[0,T],\\ \mathbf{\psi}(0,\mathbf{x})=2\text{sech}(\mathbf{x}),\\ \mathbf{\psi}(t,-L)=\mathbf{\psi}(t,L)=0,\end{cases}
\end{equation}
where $L=15$, $T=\pi$ and $\theta=0.5$. 
We discretize the spatial domain into $N_x=100$ intervals and the temporal domain into $ N_t=2000$ steps. The solution $\mathbf{\psi}(t,\mathbf{x})$ of the Schrödinger equation is a wave function, and we are concerned about its position density:
\[
\mathbf{\rho}(t,\mathbf{x})=|\mathbf{\psi}(t,\mathbf{x})|^2.
\]
Then, we set
\[
r=10,\quad \epsilon=2\times 10^{-7},\quad n_1=50,\quad m=50.
\]
Therefore, {with the number of stages $N=20$}, the prediction rate is $\gamma=50\%$, corresponding to $M=1000$ in the standard DMD method. Figure \ref{fig:NLSE1} illustrates that the standard DMD method fails to predict the NLSE, whereas A-LDMD accurately approximates the position density of the NLSE. 

\begin{figure}[htbp]
\centering
\subfigure{\includegraphics[width=1\textwidth]{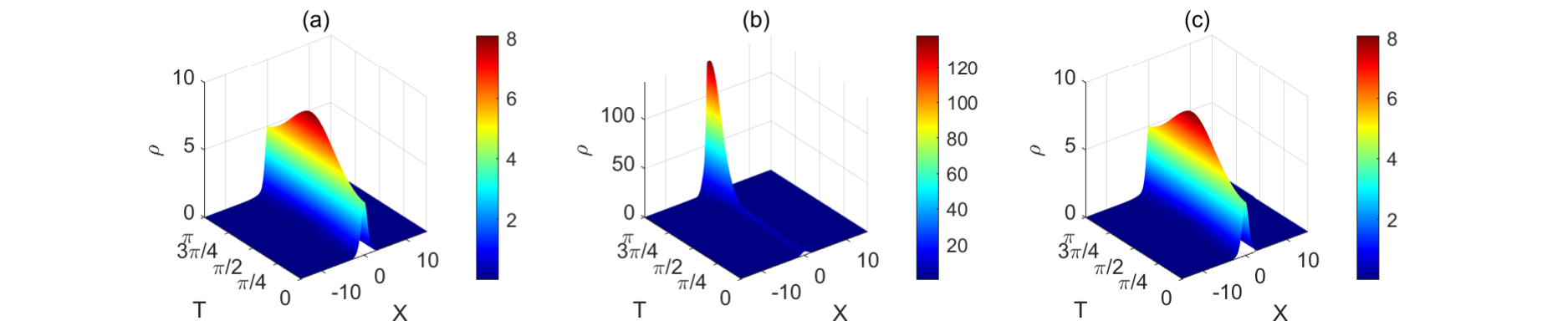}}
\caption{The position density $\mathbf{\rho}(t,\mathbf{x})$ of the NLSE: (a) the reference position density, (b) the standard DMD position density, and (c) the A-LDMD position density.}
\label{fig:NLSE1}
\end{figure}

\begin{figure}[htbp]
\centering
\subfigure{\includegraphics[width=0.4\textwidth,height=1.6in]{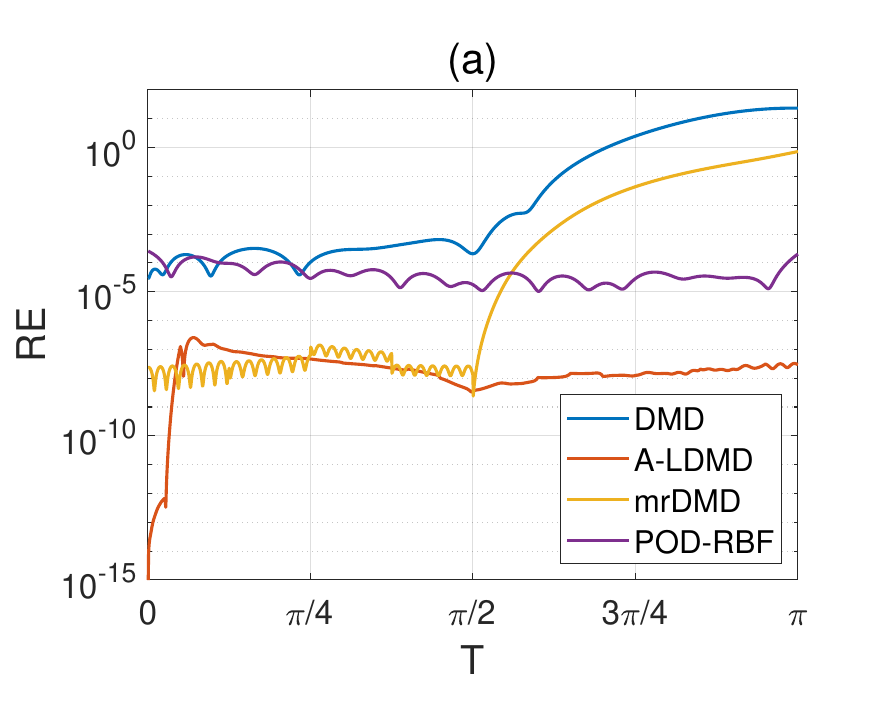}}
\subfigure{\includegraphics[width=0.4\textwidth,height=1.6in]{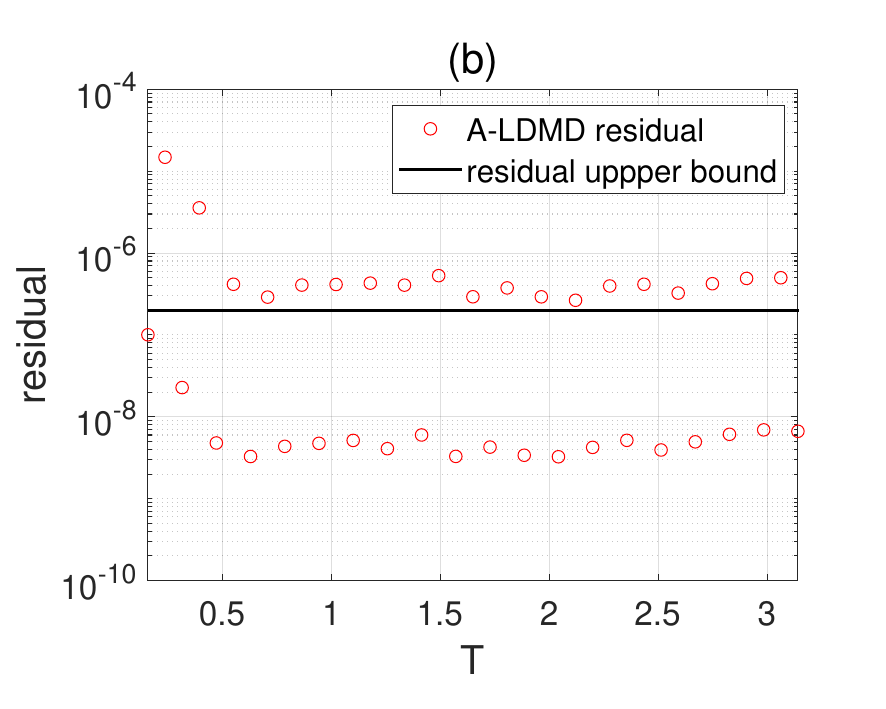}}
\caption{(a) The $L^2$ relative error of the NLSE for DMD, A-LDMD, mrDMD, and POD-RBF at $\gamma=50\%$. (b) Residual of the A-LDMD solution.}
\label{fig:NLSE2}
\end{figure}

Figure \ref{fig:NLSE2} (a) demonstrates that A-LDMD always outperforms the standard DMD method and POD-RBF. Due to the oscillatory nature of the NLSE, the standard DMD method yields unsatisfactory predictions even when appropriate observation functions, larger SVD truncation numbers, and additional snapshot data are used. Through a reasonable choice of residual upper bound and time window size, our method maintains a stable prediction error between $10^{-7}$ and $10^{-8}$, achieving the highest computational efficiency and superior prediction accuracy. The reconstruction error of mrDMD is almost comparable to the subsequent prediction error of A-LDMD, while it exhibits rapid error growth, so that fails to deliver satisfactory predictions for the NLSE. The prediction error of POD-RBF remains relatively stable, fluctuating between $10^{-5}$ and $10^{-4}$.


The residual fluctuates around the prescribed residual error threshold. Figure \ref{fig:NLSE2} (b) indicates that the DMD prediction and the FOM correction are alternated. Different from Figure \ref{fig:Burgers2} (b) and Figure \ref{fig:AC2} (b), the residual equation here corresponds directly to the FOM, which eliminates numerical errors when the FOM is used to correct the prediction results.

\begin{table}[h]
\centering
\caption{Comparison of CPU time and the mean $L^2$ relative error of the NLSE for FOM, DMD, A-LDMD, mrDMD, and POD-RBF at $\gamma=50\%$.}
\label{tab:NLSE_1}
\setlength{\extrarowheight}{2pt}
\begin{tabular}{c|c|c}
	\Xhline{1pt}
	Model & CPU time(s) & MRE \\
	\hline
	FOM & 0.1333 & / \\
	\hline
	DMD & 0.0695 & $3.5012$ \\
	\hline
	A-LDMD & 0.0921 & $3.2403\times 10^{-8}$ \\ 
	\hline
	mrDMD & 0.1867 & $0.0375$ \\
	\hline
	POD-RBF & 0.1127 & $4.9948\times 10^{-5}$ \\
	\Xhline{1pt}
\end{tabular}
\end{table}

Table \ref{tab:NLSE_1} concludes similarly to that in Table \ref{tab:burgers_4}. Due to its relatively complex processing pipeline, mrDMD incurs even higher computational cost than the FOM.


\subsection{Maxwell's equations}
Finally, we consider the time-domain Maxwell's equations, which are coupled systems used to describe the classical electromagnetic equations \cite{li2012time}:
\begin{equation}\label{eq:numerical_4}
\begin{cases}\frac{\partial \mathbf{H}}{\partial t}=-\mathbf{curl}~\mathbf{E}-\mathbf{K}+\mathbf{g}^s,&\mathbf{x}\in\Omega ,\quad t\in (0,T],\\
	\frac{\partial \mathbf{E}}{\partial t}=\mathbf{curl}~\mathbf{H}-\mathbf{J}+\mathbf{f}^s,&\mathbf{x}\in\Omega ,\quad t\in (0,T],\\
	\frac{\partial \mathbf{J}}{\partial t}=\mathbf{E}-\mathbf{J},&\mathbf{x}\in\Omega ,\quad t\in (0,T],\\
	\frac{\partial \mathbf{K}}{\partial t}=\mathbf{H}-\mathbf{K},&\mathbf{x}\in\Omega ,\quad t\in (0,T],\end{cases}
	\end{equation}
	where $\mathbf{H}=(H_x,H_y,H_z)^T$ is the magnetic field, $\mathbf{E}=(E_x,E_y,E_z)^T$ is the electric field, $\mathbf{J}=(J_x,J_y,J_z)^T$ is the polarization current density, and $\mathbf{K}=(K_x,K_y,K_z)^{T}$ is the magnetization current density. In this case, we consider the 2-D time-domain case of transverse magnetic (TM) mode formulation, characterized by the absence of magnetic field components in the $z$-direction, i.e. $\mathbf{H}=(H_x,H_y)^T$, $\mathbf{E}=E_z$, $\mathbf{J}=J_z$, and $\mathbf{K}=(K_x,K_y)^{T}$; the differential operators are
	\begin{equation*}
\mathbf{curl}~E=(\frac{\partial}{\partial y}E,-\frac{\partial}{\partial x}E)^T,\quad\mathrm{curl}~\mathbf{H}=\frac{\partial}{\partial x}H_y-\frac{\partial}{\partial y}H_x.
\end{equation*}
The source term
\begin{equation*}
\mathbf{f}^s(t,\mathbf{x})=(t-1-2\omega)\sin(\omega x)\sin(\omega y)e^{-t},
\end{equation*}
while $\mathbf{g}^s=(g^s_x,g^s_y)^{T}$ 
\begin{align*}
g^s_x(t,\mathbf{x})=(\omega-1+t)\sin(\omega x)\cos(\omega y)e^{-t},\\
g^s_y(t,\mathbf{x})=(1-\omega-t)\cos(\omega x)\sin(\omega y)e^{-t}.
\end{align*}
The initial conditions are given by
\begin{align*}
&\mathbf{H}(0,\mathbf{x})=(\sin(\omega x)\cos(\omega y),-\cos(\omega x)\sin(\omega y))^{T},\\
&\mathbf{E}(0,\mathbf{x})=\sin{\omega x}\cos{\omega y},\\
&\mathbf{J}(0,\mathbf{x})=0,\quad\quad\mathbf{K}(0,\mathbf{x})=0,
\end{align*}
where $\omega=4\pi$. The spatial domain $\Omega=[0,1]^2$, the time domain $T=2$. We performed a triangulation over the spatial domain, discretized uniformly into $20$ nodes in each direction, while the temporal domain was equally discretized into $N_t=2000$ steps. We choose the observable function $\mathbf{g}(\mathbf{u})=\left[\mathbf{u},e^\mathbf{u}\right]^{T}$. Due to the difficulty of the residual equation in coupled systems, we set
\[
r=15,\quad n_i=\begin{cases}
90, & i = 1, \\
50, & i \ge 2,
\end{cases}~ \quad m_i=\begin{cases}
10, & i = 1, \\
50, & i \ge 2.
\end{cases}~
\]
Therefore, {with the number of stages $N=20$}, the prediction rate is $\gamma=48\%\approx50\%$, corresponding to $M=1040$ in the standard DMD method. 


Due to the rapid variations in the variables of Maxwell's equations during the initial time phase, an insufficient number of snapshots or an excessively long prediction time step can lead to a sharp increase in error. When using P-LDMD, prior information is often limited. However, we can leverage the flexibility of our segmentation strategy to employ a larger number of snapshots and a smaller prediction time step in the initial phase, thereby effectively controlling error growth and providing a more accurate initial condition for the subsequent FOM correction.

Figure \ref{fig:Maxwell11}, \ref{fig:Maxwell13}, and \ref{fig:Maxwell15} show that the prediction error of the standard DMD method is substantial. In contrast, P-LDMD yields considerably more accurate prediction results. Since Maxwell’s equations (\ref{eq:numerical_4}) are intrinsically coupled under time-varying dynamics, employing a single linear operator to describe the overall system cannot explicitly capture the interaction between subsystems.

\begin{figure}[htbp]
\centering
\subfigure{\includegraphics[width=0.9\textwidth]{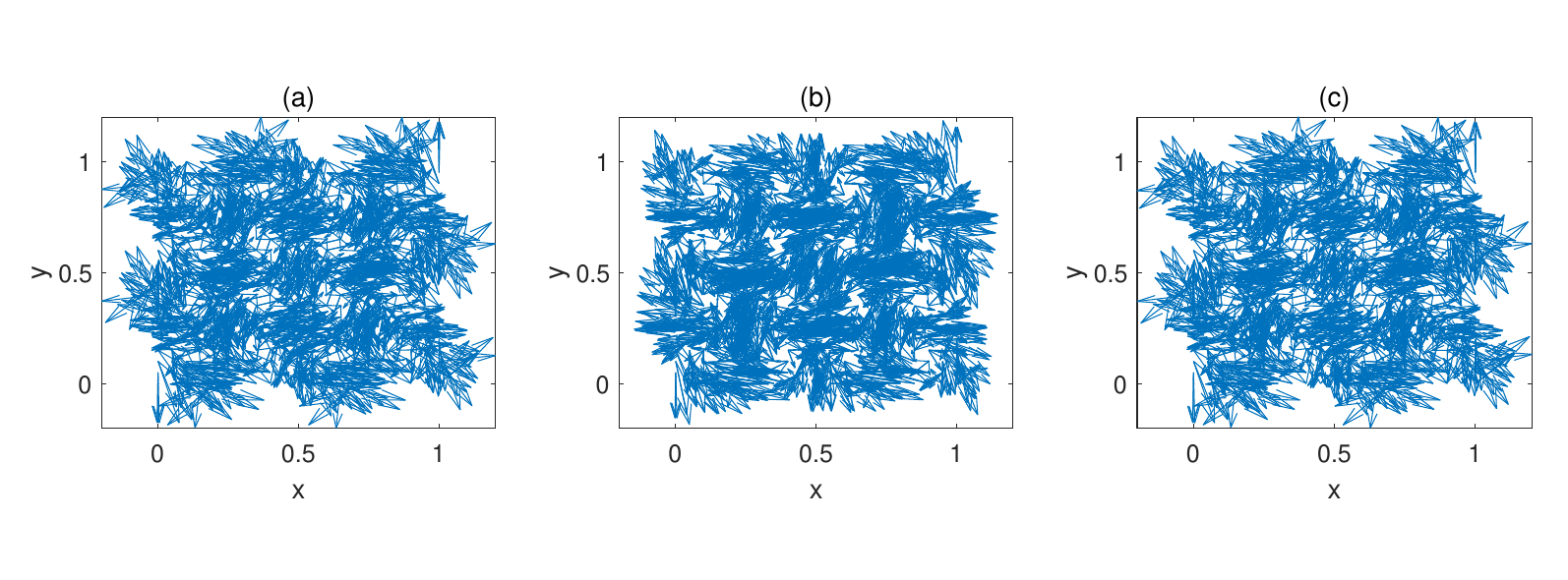}}
\caption{The magnetic field $\mathbf{H}$ of Maxwell's equations at $T=2$: (a) the reference solution, (b) the standard DMD solution, and (c) the P-LDMD solution.}
\label{fig:Maxwell11}
\end{figure}

\begin{figure}[htbp]
\centering
\subfigure{\includegraphics[width=1\textwidth]{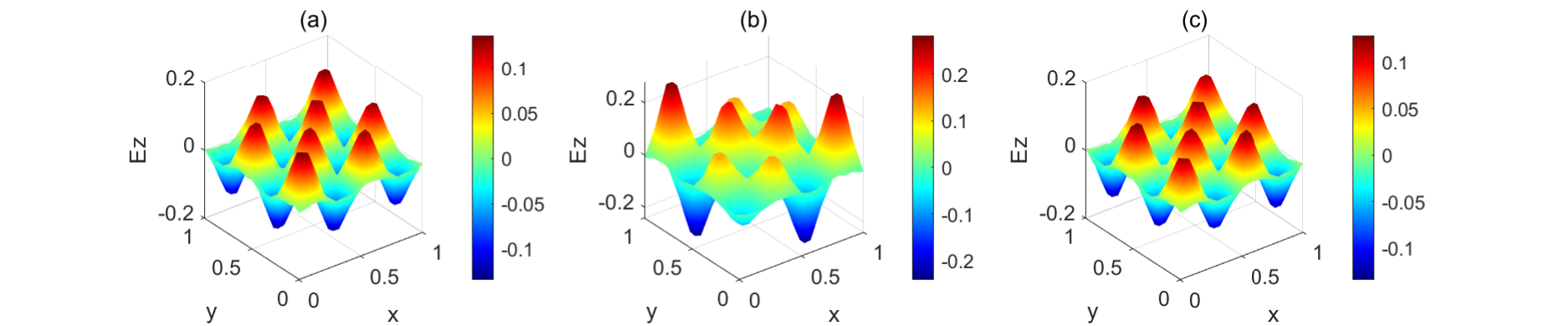}}
\caption{The electric field $E_z$ of Maxwell's equations at $T=2$: (a) the reference solution, (b) the standard DMD solution, and (c) the P-LDMD solution.}
\label{fig:Maxwell13}
\end{figure}

\begin{figure}[htbp]
\centering
\subfigure{\includegraphics[width=1\textwidth]{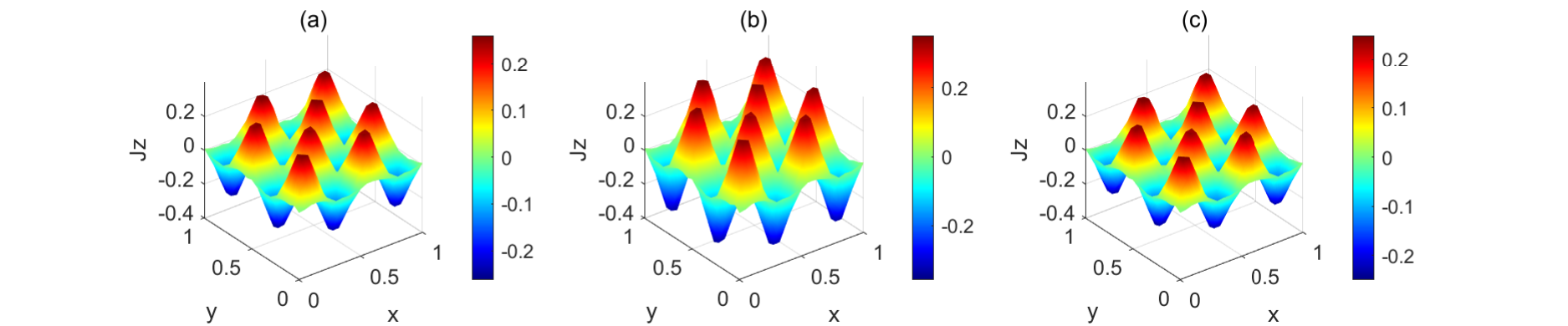}}
\caption{The polarization current density $J_z$ of Maxwell's equations at $T=2$: (a) the reference solution, (b) the standard DMD solution, and (c) the P-LDMD solution.}
\label{fig:Maxwell15}
\end{figure}

In this case, we also compare P-LDMD with two representative approaches: POD-RBF and a leading time-dependent DMD variant. Owing to the unsatisfactory performance of mrDMD, HODMD is adopted as an alternative for comparison. In the HODMD formulation, $d$ designates the number of time delays.


For the 2-D TM case, $H_x$ and $H_y$ have certain symmetry and expressions. Therefore, the DMD prediction results for both are identical. We only exhibit $H_y$ in the numerical experiment, and $H_x$ has the same prediction results.

\begin{figure}[htbp]
\centering
\subfigure{\includegraphics[width=0.32\textwidth,height=1.28in]{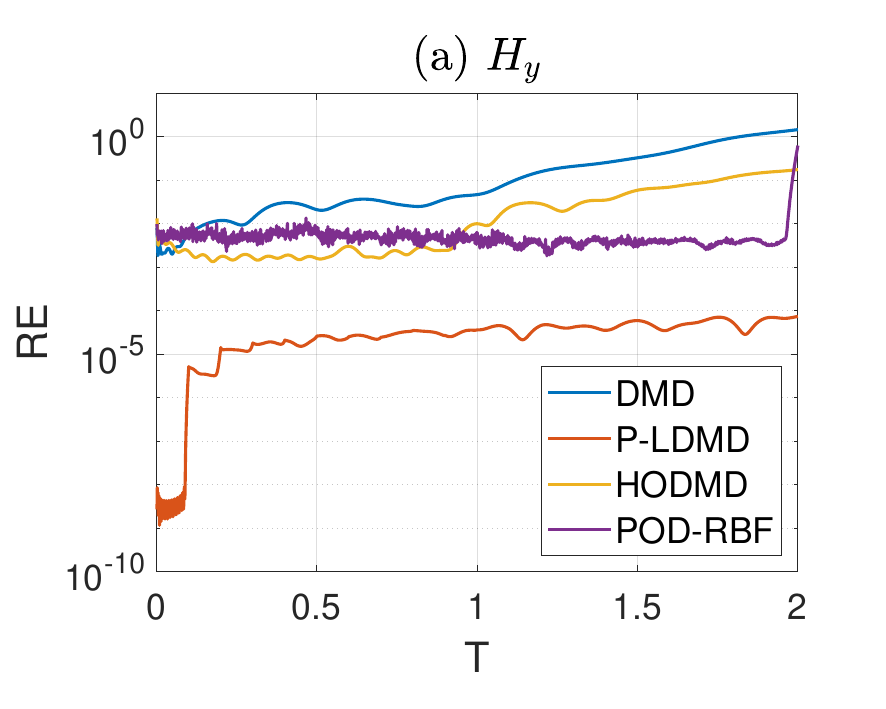}}
\subfigure{\includegraphics[width=0.32\textwidth,height=1.28in]{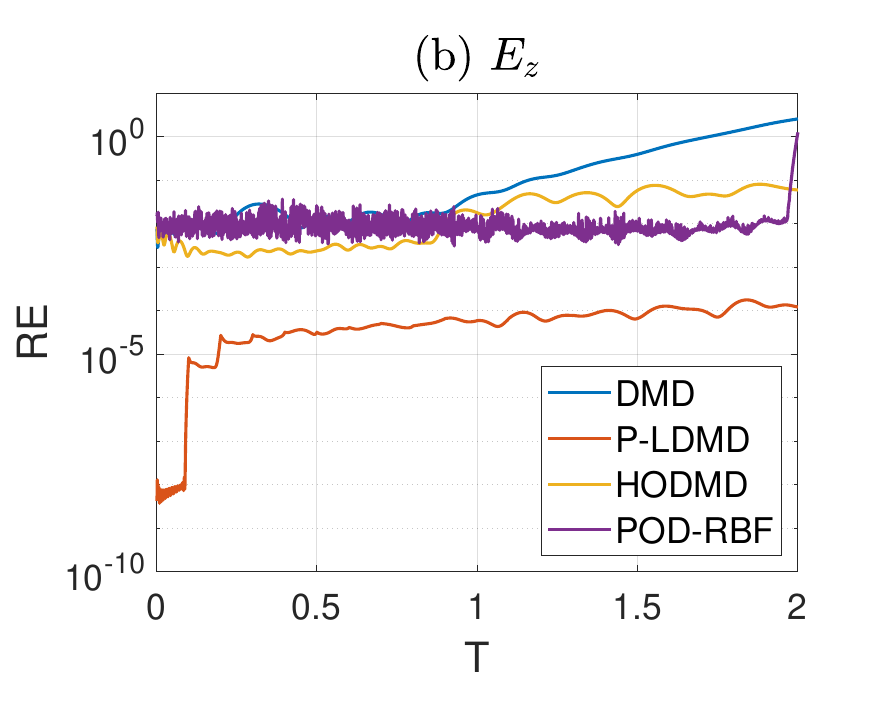}}
\subfigure{\includegraphics[width=0.32\textwidth,height=1.28in]{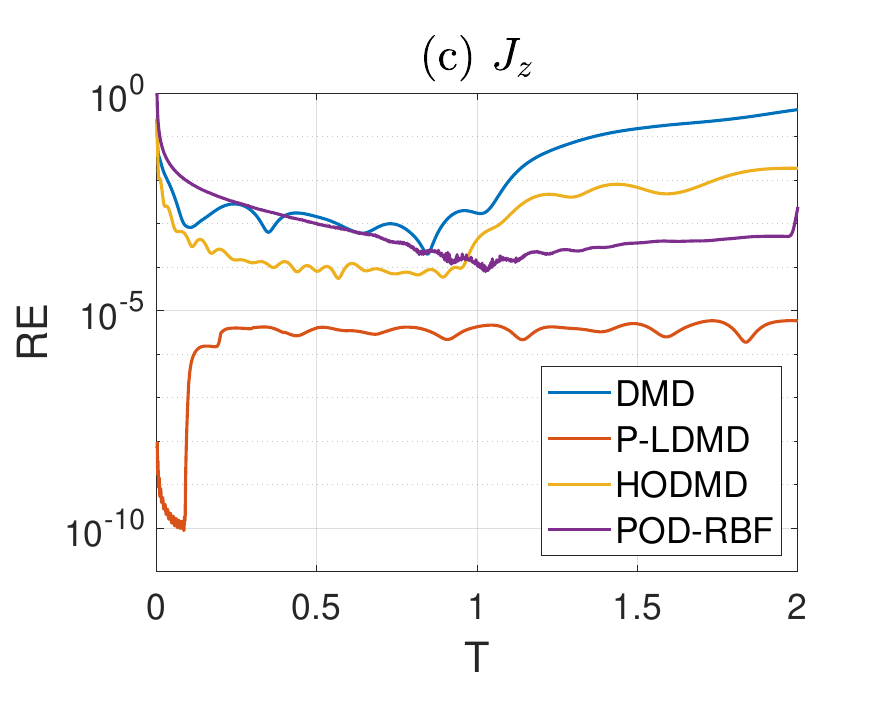}}
\caption{Model $L^2$ relative error of Maxwell's equations for DMD, P-LDMD, HODMD, and POD-RBF at $\gamma=48\%$: (a) magnetic field $H_y$, (b) electric field $E_z$, and (c) polarization current density $J_z$.}
\label{fig:Maxwell1}
\end{figure}

As presented in Table \ref{tab:Maxwell}, P-LDMD achieves superior predictive performance and computational efficiency compared to all other methods. In Figure \ref{fig:Maxwell1} (a) and (b), the prediction error of POD-RBF fluctuates around $10^{-2}$ but increases significantly in the final time steps. Figure \ref{fig:Maxwell1} (c) reveals that a relatively large prediction error occurs initially, and then gradually decreases and stabilizes between $10^{-4}$ and $10^{-3}$. Additionally, HODMD slightly outperforms standard DMD.

\begin{table}[hbtp]
\centering
\label{tab:Maxwell}
\caption{Comparison of CPU time and the mean $L^2$ relative error of Maxwell's equations for FOM, DMD, P-LDMD, HODMD, and POD-RBF at the prediction rate $\gamma=48\%$.}
\setlength{\extrarowheight}{2pt}
\begin{tabular}{c|c|c|c}
	\Xhline{1pt}
	Model & CPU time(s) & \multicolumn{2}{c}{MRE} \\ 
	\hline
	FOM & 294.5803 & \multicolumn{2}{c}{/} \\
	\hline
	\multirow{3}{*}{\centering DMD} & \multirow{3}{*}{\centering 151.0478} & magnetic field & 0.2614 \\
	
	\cline{3-4}
	& & electric field & 0.3594 \\
	
	\cline{3-4}
	& & polarization current density & 0.0801 \\
	
	\hline
	\multirow{3}{*}{\centering P-LDMD} & \multirow{3}{*}{\centering 148.0742} & magnetic field & $3.3659\times 10^{-5}$ \\
	
	\cline{3-4}
	& & electric field & $6.4252\times 10^{-5}$ \\
	
	\cline{3-4}
	& & polarization current density & $3.5211\times 10^{-6}$ \\
	
	\hline
	\multirow{3}{*}{\centering HODMD} & 151.8481($d=100$) & magnetic field & 0.0355 \\
	
	\cline{2-4}
	& 152.8023($d=150$) & electric field & 0.0265 \\
	
	\cline{2-4}
	& 151.5067($d=80$) & polarization current density & 0.0043 \\
	
	\hline
	\multirow{3}{*}{\centering POD-RBF} & \multirow{3}{*}{\centering 151.4912} & magnetic field & 0.0076 \\
	
	\cline{3-4}
	& & electric field & 0.0148 \\
	
	\cline{3-4}
	& & polarization current density & 0.0034 \\
	
	\Xhline{1pt}
	
\end{tabular}
\end{table}

\section{Conclusions}\label{sec:conclu}
In this work, we have proposed a novel localized variant of DMD that integrates the standard DMD framework with a time-domain decomposition strategy. Leveraging the concept of the linearized truncation of the Taylor expansion, the temporal domain is divided into consecutive short segments exhibiting strong local linearity, thereby maximizing the use of DMD’s intrinsic linear predictive capability.
Concerning the temporal segmentation, we first proposed LDMD with predefined segmentation, which is straightforward to implement but relies on prior knowledge of the system to achieve higher accuracy. To overcome this limitation, we further developed LDMD with adaptive segmentation, which employs an error estimator for control and operates without prior system observations, thereby significantly enhancing its practical applicability. 


Numerical results demonstrate that LDMD achieves the highest numerical accuracy and maintains superior computational efficiency compared to time-dependent DMD variants such as mrDMD and HODMD. Moreover, since the low-rank approximation core of DMD relies on SVD truncation, LDMD employs a localized strategy, meaning the matrix dimension for SVD at each stage is significantly smaller than that of standard DMD. This may lead to a higher computational efficiency than standard DMD.  Additionally, the residual computation in our approach can be performed on a coarser spatial grid, thereby further reducing computational cost. 


For future research, we aim to develop a more effective error estimator that aligns with the equation-free nature of DMD, ensuring broader applicability, computational simplicity, and the ability to correct inaccurate predictions for subsequent stages. Additionally, we seek to provide a comprehensive and rigorous theoretical analysis comparing LDMD with standard DMD, demonstrating that LDMD consistently outperforms standard DMD across all time intervals by optimizing the time window size and error estimator threshold. {Furthermore, we will consider combining the LDMD framework with the ResDMD \cite{colbrook2024rigorous} methodology to enhance predictive capability in chaotic systems exhibiting extreme sensitivity to initial conditions and inherent long-term unpredictability.} Ultimately, we aspire to establish LDMD as a high-accuracy surrogate model for parametric dynamical systems, further enhancing the predictive capabilities of existing methodologies \cite{model2024Song,adaptive2024li}.

\section*{Acknowledgments}
The authors would like to express their sincere appreciation to the anonymous reviewers for their insightful comments and constructive suggestions, which have greatly contributed to improving the quality and clarity of this manuscript.

\bibliographystyle{siamplain}

\end{document}